\renewcommand\eqref[1]{(\ref{#1})}
\newcommand*{\mint}[1]{%
  \mint@l{#1}{}%
}
\newcommand*{\mint@l}[2]{%
  \@ifnextchar\limits{%
    \mint@l{#1}%
  }{%
    \@ifnextchar\nolimits{%
      \mint@l{#1}%
    }{%
      \@ifnextchar\displaylimits{%
        \mint@l{#1}%
      }{%
        \mint@s{#2}{#1}%
      }%
    }%
  }%
}
\newcommand*{\mint@s}[2]{%
  \@ifnextchar_{%
    \mint@sub{#1}{#2}%
  }{%
    \@ifnextchar^{%
      \mint@sup{#1}{#2}%
    }{%
      \mint@{#1}{#2}{}{}%
    }%
  }%
}
\def\mint@sub#1#2_#3{%
  \@ifnextchar^{%
    \mint@sub@sup{#1}{#2}{#3}%
  }{%
    \mint@{#1}{#2}{#3}{}%
  }%
}
\def\mint@sup#1#2^#3{%
  \@ifnextchar_{%
    \mint@sup@sub{#1}{#2}{#3}%
  }{%
    \mint@{#1}{#2}{}{#3}%
  }%
}
\def\mint@sub@sup#1#2#3^#4{%
  \mint@{#1}{#2}{#3}{#4}%
}
\def\mint@sup@sub#1#2#3_#4{%
  \mint@{#1}{#2}{#4}{#3}%
}
\newcommand*{\mint@}[4]{%
  \mathop{}%
  \mkern-\thinmuskip
  \mathchoice{%
    \mint@@{#1}{#2}{#3}{#4}%
        \displaystyle\textstyle\scriptstyle
  }{%
    \mint@@{#1}{#2}{#3}{#4}%
        \textstyle\scriptstyle\scriptstyle
  }{%
    \mint@@{#1}{#2}{#3}{#4}%
        \scriptstyle\scriptscriptstyle\scriptscriptstyle
  }{%
    \mint@@{#1}{#2}{#3}{#4}%
        \scriptscriptstyle\scriptscriptstyle\scriptscriptstyle
  }%
  \mkern-\thinmuskip
  \int#1%
  \ifx\\#3\\\else_{#3}\fi
  \ifx\\#4\\\else^{#4}\fi
}
\newcommand*{\mint@@}[7]{%
  \begingroup
    \sbox0{$#5\int\m@th$}%
    \sbox2{$#5\int_{}\m@th$}%
    \dimen2=\wd0 %
    \let\mint@limits=#1\relax
    \ifx\mint@limits\relax
      \sbox4{$#5\int_{\kern1sp}^{\kern1sp}\m@th$}%
      \ifdim\wd4>\wd2 %
        \let\mint@limits=\nolimits
      \else
        \let\mint@limits=\limits
      \fi
    \fi
    \ifx\mint@limits\displaylimits
      \ifx#5\displaystyle
        \let\mint@limits=\limits
      \fi
    \fi
    \ifx\mint@limits\limits
      \sbox0{$#7#3\m@th$}%
      \sbox2{$#7#4\m@th$}%
      \ifdim\wd0>\dimen2 %
        \dimen2=\wd0 %
      \fi
      \ifdim\wd2>\dimen2 %
        \dimen2=\wd2 %
      \fi
    \fi
    \rlap{%
      $#5%
        \vcenter{%
          \hbox to\dimen2{%
            \hss
            $#6{#2}\m@th$%
            \hss
          }%
        }%
      $%
    }%
  \endgroup
}
\numberwithin{equation}{section}
\theoremstyle{plain}
\newtheorem{thm}{Theorem}[section]
\newtheorem{prop}[thm]{Proposition}
\newtheorem{cor}[thm]{Corollary}
\theoremstyle{definition}
\theoremstyle{remark}
\newtheorem{rem}{Remark}[section]
\newtheorem{defn}{Definition}
\numberwithin{equation}{section}
\DeclareMathOperator{\Tr}{Tr}
\DeclareMathOperator{\spn}{span}
\let\Re\relax
\DeclareMathOperator{\Re}{Re}
\DeclareMathOperator{\supp}{supp}
\DeclareMathOperator*{\esssup}{ess\,sup}
\begin{document}

\title[Singular Klein-Gordon equation on a bounded domain]{Singular Klein-Gordon equation on a bounded domain}

\author[M. Ruzhansky]{Michael Ruzhansky}
\address{
  Michael Ruzhansky:
  \endgraf
  Department of Mathematics: Analysis, Logic and Discrete Mathematics
  \endgraf
  Ghent University, Krijgslaan 281, Building S8, B 9000 Ghent
  \endgraf
  Belgium
  \endgraf
  and
  \endgraf
  School of Mathematical Sciences
  \endgraf
  Queen Mary University of London
  \endgraf
  United Kingdom
  \endgraf
  {\it E-mail address} {\rm michael.ruzhansky@ugent.be}
}

\author[A. Yeskermessuly ]{Alibek Yeskermessuly}
\address{
  Alibek Yeskermessuly:
  \endgraf   
  Faculty of Natural Sciences and Informatization
  \endgraf
  Altynsarin Arkalyk Pedagogical Institute, Auelbekov, 17, 110300 Arkalyk, Kazakhstan
  \endgraf  
  {\it E-mail address:} {\rm alibek.yeskermessuly@gmail.com}
  }

\thanks{This research was funded by the Science Committee of the Ministry of Science and Higher Eduacation of the Republic of Kazakhstan (Grant No. AP23486342), by the FWO Odysseus 1 grant G.0H94.18N: Analysis and Partial Differential Equations, and by the Methusalem programme of the Ghent University Special Research Fund (BOF) (Grant number 01M01021). MR is also supported by EPSRC grant EP/V005529/1 and FWO Research Grant G083525N}

\keywords{wave equation, Laplacian, initial/boundary problem, energy methods, weak solution, singular coefficients, regularization, very weak solution.}
\subjclass[2020]{35D30, 35L05, 35L20, 35L81.}

\begin{abstract}
In this paper, we consider the wave equation for the Laplace operator with potential, initial data, and nonhomogeneous Dirichlet boundary condition. We establish a weak solution by using traces and extension domains. We also establish the existence, uniqueness and consistency of the very weak solution for the wave equation with singularities in the potential, initial data, source term, boundary and boundary condition.
\end{abstract}

\maketitle

\section{Introduction}
Our aim in this paper is to investigate the well-posedness of the wave equation for the Laplace operator with a potential, initial data, and non-homogeneous Dirichlet boundary conditions, when the coefficients exhibit singularities. Specifically, let us consider the equation
\begin{equation}\label{1.1}
    \partial^2_tu(t,x)-\Delta u(t,x)+V(x)u(t,x)=f(t,x),\quad (t,x)\in [0,T]\times \Omega\subset\mathbb{R}^n,
\end{equation}
with initial conditions
\begin{equation}\label{1.2}
    u(0,x)=u_0(x),\quad  \partial_t u(0,x)=u_1(x),\quad x\in \Omega,
\end{equation}
and boundary condition
\begin{equation}\label{1.3}
    u(t,x)=g(t,x),\quad (t,x)\in [0,T]\times\partial\Omega,
\end{equation}
with $V$, $f$, $u_0$, $u_1$, $g$ and $\partial\Omega$ allowing singularities.

It is well known and has been thoroughly studied (see, e.g., \cite{Af-Roz},  \cite{DHP07}, \cite{DRP-22}) when the coefficients are regular functions. It is difficult to establish the well-posedness of the initial/boundary problem \eqref{1.1}-\eqref{1.3}, where the spatial potential $V$, the source term $f$, the initial data $(u_0,u_1)$, and the nonhomogeneous Dirichlet boundary condition $g$ are allowed to be nonregular functions, such as Dirac delta functions, and when the boundary may be irregular. We conducted this study within the framework of very weak solutions. The motivation for adopting this approach stems from the fact that, when the equation involves products of distributional terms, it becomes impossible to formulate the problem in the distributional setting. This issue is related to the well-known result by Schwartz \cite{Sch54}, which demonstrates the impossibility of multiplying distributions.

To effectively address this problem, the concept of very weak solutions was introduced in \cite{GR15} to analyze second-order hyperbolic equations with strongly singular coefficients. Since then, this approach has been further developed for various problems, as seen in works such as \cite{RT17a}, \cite{RT17b}, \cite{MRT19}, \cite{ART19}, \cite{ARST21a}, \cite{ARST21b}, \cite{ARST21c}, \cite{CRT21}, \cite{CRT22a}, \cite{CRT22b}, \cite{SW22}, \cite{CDRT23} and \cite{GS24}, among others. Several of these studies, particularly \cite{ARST21a}, \cite{ARST21b}, \cite{ARST21c}, \cite{CRT21}, \cite{CRT22a}, and \cite{CRT22b}, employ energy methods in their arguments. Recent works \cite{RSY22}, \cite{RY22}, \cite{RY24a} and \cite{RY24b} have explored the existence of solutions to initial/boundary value problems for the Sturm-Liouville operator, considering various types of time-dependent singular coefficients. In these studies, separation of variables techniques (see, e.g. \cite{Evans}) were used to derive explicit formulas for classical solutions. 

The main difference between our work and previous studies is that we also consider the wave equation with singular nonhomogeneous Dirichlet boundary conditions, which has not been explored before, with singular potential and singular boundary.

In works \cite{KL09}, \cite{KL11}, \cite{MeWi} it is shown that the regularity of the initial data enables the derivation of regular solutions up to the boundary. Specifically, a $C^1$-regular boundary allows for the use of the Sobolev space $H^2(\Omega)$ in the Laplace domain \cite{Evans}. However, in the nonconvex case with a Lipschitz boundary \cite{Gris}, it is no longer possible, as we are limited to $H^1$. The convexity condition prevents the formation of inward angles that are responsible for creating singularities.

For equations with non-homogeneous Dirichlet boundary conditions, the methods described in \cite{Af-Roz} and \cite{DRP-22} are appropriate, where well-posedness is achieved through the Galerkin approximations by using traces and extension domains. Additionally, they obtained weak well-posedness of the wave equation with NTA domain in $\mathbb{R}^2$ and admissible domain in $\mathbb{R}^3$ regarding in the work \cite{Nyst} and \cite{Xie} respectively.

In our work, we extend these methods to cases where the coefficients exhibit singularities, employing the concept of very weak solutions.

\section{Traces and extension domains}

To start our research, we need some preliminaries on traces and extension domains. We introduce the existing results about traces and extension domains. For more detailed study, we cite \cite{Af-Roz}, \cite{JW-84}, \cite{DRP-22}.

    Let $\Omega\subset \mathbb{R}^n$ be a domain, $k\in \mathbb{N}$ a non-negative integer, and $1\leq p\leq \infty$. The Sobolev space $W^k_p(\Omega)$ is the space of functions $u\in L^p(\Omega)$ whose weak derivatives $D^\alpha$ of order $|\alpha|\leq k$ exist in the distributional sense and belong to $L^p(\Omega)$. Formally, we define 
    \[
    W^k_p(\Omega)=\left\{u\in L^p(\Omega): D^\alpha u\in L^p(\Omega)\text{ for all } |\alpha|\leq k\right\},
    \]
    where $D^\alpha u$ denotes the weak derivative of $u$ corresponding to the multi-index $\alpha$. The norm on this space is given by
    \[
    \|u\|_{W^k_p(\Omega)}= \left(\sum\limits_{|\alpha|\leq k}\|D^\alpha u\|^p_{L^p(\Omega)}\right)^{\frac{1}{p}}\quad \text{for }1\leq p<\infty,
    \]
    and for $p=\infty$
    \[
    \|u\|_{W^k_\infty(\Omega)}= \max\limits_{|\alpha|\leq k}\|D^\alpha u\|_{L^\infty(\Omega)}.
    \]

\begin{defn}[$W^k_p$-extension domains \cite{JW-84}, \cite{Af-Roz}]\label{def.1}
    A domain $\Omega\subset \mathbb{R}^n$ is called a $W^k_p$-extension domain ($k\in\mathbb{N}$) if there exists a bounded linear extension operator $E: W^k_p(\Omega)\to W^k_p(\mathbb{R}^n)$. This means that for all $u\in W^k_p(\Omega)$ there exists a $v=Eu\in W^k_p(\mathbb{R}^n)$ with $v\big|_\Omega=u$ and we have 
    \[
    \|v\|_{W^k_p(\mathbb{R}^n)}\leq C\|u\|_{W^k_p(\Omega)}
    \]
    with a constant $C>0$.
\end{defn}

\begin{defn}[$d$-measure \cite{JW-84}]\label{d-measure}
    Let $F$ be a closed non-empty subset of $\mathbb{R}^n$ and $d$ a real number satisfying $0<d\leq n$. The closed ball with center $x$ and radius $r$ is denoted by $B(x,r)$. A positive Borel measure $\mu$ with support $F$, $\supp \mu=F$, is called a $d$-measure on $F$ if, for some constants $c_1,\,c_2>0$,
    \[
    c_1r^d\leq \mu(B(x,r))\leq c_2r^d, \,\, \text{for } x\in F,\, 0<r\leq 1.
    \]
\end{defn}

\begin{defn}[Ahlfors $d$-regular set or $d$-set \cite{JW-84}, \cite{Af-Roz}]\label{d-set}
    A closed, non-empty subset of $\mathbb{R}^n$ is a $d$-set $(0<d\leq n)$ if there exists a $d$-measure on $F$.
\end{defn}

\begin{defn}[Markov's local inequality \cite{JW-84}, \cite{Af-Roz}]\label{d.2}
    A closed subset $V$ in $\mathbb{R}^n$ preserves Markov's local inequality if for every fixed $k\in \mathbb{N}^*$, there exists a constant $c=c(V,n,k)>0$, such that 
    \[
    \max\limits_{V\cap\overline{B_{r}(x)}}|\nabla P|\leq \frac{c}{r}\max\limits_{V\cap \overline{B_r(x)}}|P|
    \]
    for all polynomials $P$ of degrees at most $k$ (denoted by $\mathcal{P}_k$) and all closed balls $\overline{B_{r}(x)}$, $x\in V$ and $0<r\leq 1$.
\end{defn}

\begin{defn}[\cite{JW-84}, \cite{Af-Roz}, \cite{DRP-22}]\label{d.C^k_p}
    For a set $\Omega\subset \mathbb{R}^n$ of positive Lebesgue measure, we define
\[
C^k_p(\Omega)=\left\{f\in L^p(\Omega)|f^\sharp_{k,\Omega}(x)=\sup\limits_{r>0} r^{-k} \inf\limits_{P\in \mathcal{P}^{k-1}}\frac{1}{\mu(B_r(x))}\int_{B_r(x)\cap \Omega}|f-P|dy\in L^p(\Omega)\right\}
\]
with norm $\|f\|_{C^k_p(\Omega)}=\|f\|_{L^p(\Omega)}+\|f^\sharp_{k,\Omega}\|_{L^{p}(\Omega)}$.
\end{defn}

\begin{defn}\label{d.Tr}
    For an arbitrary open set $\Omega$ of $\mathbb{R}^n$, the trace operator $\Tr$ is defined for $u\in L^1_{loc}(\Omega)$ by
    \[
    \Tr u(x)=\lim\limits_{r\to 0}\frac{1}{\mu(\Omega\cap B_r(x))}\int_{\Omega\cap B_r(x)}u(y)dy.
    \]
    The trace operator $\Tr$ is considered for all $x\in \overline{\Omega}$ for which the limit exists.
\end{defn}

\begin{defn}[Admissible domain \cite{DRP-22}, \cite{Af-Roz}]\label{d.3}
   A domain $\Omega \subset \mathbb{R}^n$ is called admissible if it is an $n$-set, such that for $1<p<\infty$ and $k\in \mathbb{N}^*$, $W^{k}_p(\Omega)=C^k_p(\Omega)$ as sets with equivalent norms (hence, $\Omega$ is a $W^k_p$-extension domain), with a closed $d$-set boundary $\partial \Omega$, $0<d<n$, preserving local Markov's inequality.
\end{defn}

\begin{thm}[\cite{JW-84}]\label{thm-trace}
    Let $1<p<\infty$, $k\in \mathbb{N}^*$ be fixed. Let $\Omega$ be an admissible domain in $\mathbb{R}^n$. Then, for $\beta=k-(n-d)/p>0$, the following trace operators 
    \begin{enumerate}
        \item $\Tr:W^k_p(\mathbb{R}^n)\to B^{p,p}_\beta(\partial\Omega)\subset L^p(\partial\Omega)$,
        \item $\Tr_{\Omega}: W^k_p(\mathbb{R}^n)\to W^k_p(\Omega)$,
        \item $\Tr_{\partial \Omega}: W^k_p(\Omega)\to B^{p,p}_\beta (\partial \Omega)$
    \end{enumerate}
    are linear continuous and surjective with linear bounded right inverse, i.e. with bounded extension operators $E: B^{p,p}_\beta (\partial\Omega)\to W^k_p(\mathbb{R}^n)$, $E_\Omega:W^k_p(\Omega)\to W^k_p(\mathbb{R}^n)$ and $E_{\partial \Omega}: B^{p,p}_\beta(\partial \Omega)\to W^k_p(\Omega)$.
\end{thm}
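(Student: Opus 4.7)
The plan is to treat the three statements not as independent claims but as three facets of the Jonsson--Wallin trace theory on $d$-sets, plugged into the admissibility hypothesis. I would begin by observing that the assumption ``$\Omega$ admissible'' bundles together exactly the four analytic ingredients the proof needs: $\Omega$ is an $n$-set, $W^k_p(\Omega) = C^k_p(\Omega)$ with equivalent norms, $\Omega$ is a $W^k_p$-extension domain, and $\partial\Omega$ is a $d$-set preserving Markov's local inequality. Each of the three trace statements uses a different combination of these.

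Statement (2) is the easy one. Restriction $\Tr_\Omega u = u|_\Omega$ is bounded $W^k_p(\mathbb{R}^n) \to W^k_p(\Omega)$ with norm $\leq 1$, and the existence of a bounded right inverse $E_\Omega$ is literally Definition~\ref{def.1} applied through the admissibility hypothesis, so nothing further is required.

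For (1), I would first fix the pointwise representative: for $u \in W^k_p(\mathbb{R}^n)$, the Lebesgue differentiation limit in Definition~\ref{d.Tr} exists $\mu$-a.e.\ on $\partial\Omega$ (because $\partial\Omega$ is a $d$-set, so $\mu$ controls sufficiently many balls). Continuity of $\Tr : W^k_p(\mathbb{R}^n) \to B^{p,p}_\beta(\partial\Omega)$ with $\beta = k-(n-d)/p$ is then obtained by estimating the Besov seminorm
\[
\iint_{\partial\Omega \times \partial\Omega}\frac{|\Tr u(x) - \Tr u(y)|^p}{|x-y|^{d+\beta p}}\,d\mu(x)\,d\mu(y)
\]
via a dyadic chain of balls joining $x$ and $y$, each step bounded by a Poincar\'e/Sobolev inequality of order $k$ on the enclosing Euclidean ball; summation in the dyadic scale converges precisely because $\beta > 0$. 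The embedding $B^{p,p}_\beta(\partial\Omega) \hookrightarrow L^p(\partial\Omega)$ is then standard. Surjectivity of $\Tr$ (i.e.\ construction of $E$) is the genuinely hard step and goes through a Whitney decomposition of $\mathbb{R}^n \setminus \partial\Omega$: on each Whitney cube pick a polynomial in $\mathcal{P}^{k-1}$ that approximates the boundary datum to the expected order, glue with a smooth partition of unity, and control $k$-th order derivatives in $L^p(\mathbb{R}^n)$. This last control is where Markov's local inequality from Definition~\ref{d.2} is indispensable: it is exactly what allows one to pass estimates on polynomial coefficients (coming from the Besov data) to sup-norm estimates on derivatives of the polynomials in $\mathbb{R}^n$.

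Finally, (3) follows by composition. The forward trace is $\Tr_{\partial\Omega} = \Tr \circ E_\Omega$, continuous $W^k_p(\Omega) \to B^{p,p}_\beta(\partial\Omega)$ by combining the extension from (2) with the trace from (1); the equivalence $W^k_p(\Omega) = C^k_p(\Omega)$ guarantees this composition does not depend on the particular extension used, so the operator is well defined. The bounded right inverse is $E_{\partial\Omega} = \Tr_\Omega \circ E$, which maps $B^{p,p}_\beta(\partial\Omega)$ into $W^k_p(\Omega)$ by (1) and (2). The principal obstacle is, unambiguously, the Whitney extension in (1): everything else is either definitional (admissibility) or formal composition, whereas the Whitney step is where the geometric hypothesis on $\partial\Omega$ (the $d$-set property plus Markov's inequality) is really used to translate smoothness on a possibly non-smooth fractal-type boundary into smoothness in the ambient space.
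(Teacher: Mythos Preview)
The paper does not prove this theorem at all: it is stated with the citation \cite{JW-84} and used as a black box, so there is no ``paper's own proof'' to compare against. Your sketch is a faithful outline of the Jonsson--Wallin argument itself (Whitney extension from a $d$-set using Markov's inequality to control polynomial derivatives, plus formal composition for the intrinsic trace), which is exactly the content of the reference the paper invokes; in that sense your proposal goes well beyond what the paper requires, since for the purposes of this paper one simply cites the result.
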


For $\alpha>0$ and $1\leq p,\,q\leq \infty$ the Besov spaces in $L^p$-norm, $B^{p,q}_\alpha(F)$ are defined as follows. First we recall the space on $\mathbb{R}^n$. 
\begin{defn}[\cite{JW-84}]\label{Besov}
Let $k$ be the integer such that $0\leq k<\alpha\leq k+1$. Then the Besov space or Lipschitz space $\Lambda^{p,q}_\alpha(\mathbb{R}^n)$ consists of those $f\in L^p$ such that
\begin{equation}\label{2.1}
\sum\limits_{j\leq k}\|D^jf\|_p+\sum\limits_{j=k}\left\{\int_{\mathbb{R}^n}\frac{\|\Delta_hD^jf\|^q_p}{|h|^{n+(\alpha-k)q}}dh\right\}^{\frac{1}{q}}<\infty,
\end{equation}
if $k<\alpha<k+1$ and $1\leq q<\infty$. 
\end{defn}
If $q=\infty$, \eqref{2.1} shall be interpreted in the usual limiting way and if $\alpha=k+1$ the first difference $\Delta_h$ in \eqref{2.1} shall be replaced by the second difference $\Delta^2_h$. The norm of $f\in \Lambda^{p,q}_\alpha (\mathbb{R}^n)$ is given by \eqref{2.1} and $\Lambda^{p,q}_\alpha (\mathbb{R}^n)$ is a Banach space. If $1\leq p,\,q<\infty$, than $C^\infty_0(\mathbb{R}^n)$ is dense in $\Lambda^{p,q}_\alpha (\mathbb{R}^n)$.

We denote by $L^p(\mu;F)$ the Lebesgue space on $F$ with respect to $\mu$, with norm
\[
\|f\|_{p,\mu}=\left\{\int_F|f|^pd\mu\right\}^{1/p}.
\]
\begin{defn}[\cite{JW-84}]\label{Besov space1}
Let $F\subset \mathbb{R}^n$ be a $d$-set, $0<d\leq n$, $0<\alpha<1$ and $1\leq p\leq \infty$. A function $f$ belongs to the Besov space $B^{p,p}_\alpha(F)$ if and only if it has finite norm
    \[
    \|f\|_{B^{p,p}_\alpha(F)}=\|f\|_{p,\mu} +\left(\iint_{|x-y|<1}\frac{|f(x)-f(y)|^p}{|x-y|^{d+\alpha p}}d\mu(x)d\mu(y)\right)^{1/p}
    \]   
where $x,\,y\in F$ and $\mu$ is a $d$-measure, defined in Definition \ref{d-measure}.
\end{defn}

Note that $B^{p,p}_\alpha(\mathbb{R}^n)=\Lambda^{p,p}_\alpha(\mathbb{R}^n)$. One can also define Besov space $B^{p,q}_\alpha(F)$ where $p\neq q$. For a detailed study we refer to \cite{JW-84}.

\begin{thm}[Poincaré's inequality (\cite{DRP-22})]\label{Poincare}
Let $\Omega\subset \mathbb{R}^n$ be a bounded domain with $n\geq 2$. For all $u\in W^{1,p}_0(\Omega)$, where $1\leq p<+\infty$, we have the following inequality: 
\[
\|u\|_{L^p(\Omega)}\leq C \|\nabla u\|_{L^p(\Omega)},
\]
where the constant $C$ depends only on $p$, $q$, $n$ and $\Omega$.
Thus, the semi-norm $\|\cdot\|_{W^{1,p}_0(\Omega)}$ defined by $\|u\|_{W^{1,p}_0(\Omega)}:=\|\nabla u\|_{L^p(\Omega)}$, is a norm and is equivalent to the norm $\|\cdot\|_{W^{1,p}(\Omega)}$ on $W^{1,p}_0(\Omega)$.

Moreover, if $\Omega$ is a bounded $n$-set and $W^{1}_p(\Omega)=C^1_p(\Omega)$ for $1<p<+\infty$, then for every $u\in W^{1,p}(\Omega)$, there exists a constant $C>0$ depending only on $\Omega$, $p$, and $n$ such that 
\[
\left\|u-\frac{1}{\lambda(\Omega)}\int_\Omega ud\lambda \right\|_{L^p(\Omega)}\leq C\|\nabla u\|_{L^p(\Omega)},
\]
where $\lambda(\Omega)$ is the $n$-dimensional Lebesgue measure of $\Omega$.
\end{thm}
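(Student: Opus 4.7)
The plan is to prove the two inequalities separately using standard but genuinely different arguments.

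For the first inequality on $W^{1,p}_0(\Omega)$, I would use the classical one-dimensional sliding argument. Since $\Omega$ is bounded, enclose it in a slab $\{x\in\mathbb{R}^n: -R < x_1 < R\}$. By density of $C^\infty_c(\Omega)$ in $W^{1,p}_0(\Omega)$, it suffices to treat $u \in C^\infty_c(\Omega)$ extended by zero; writing
\[
u(x_1, x') = \int_{-R}^{x_1} \partial_1 u(s, x')\,ds
\]
and applying Hölder's inequality in $s$ yields the pointwise bound $|u(x)|^p \leq (2R)^{p-1} \int_{-R}^{R} |\partial_1 u(s, x')|^p\,ds$. Integrating over $\Omega$ and taking $p$-th roots gives the claim with $C = 2R$, and density extends the estimate to all of $W^{1,p}_0(\Omega)$. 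The equivalence of $\|\nabla\cdot\|_{L^p(\Omega)}$ with $\|\cdot\|_{W^{1,p}(\Omega)}$ on $W^{1,p}_0(\Omega)$ is then immediate.

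For the second, Poincaré-Wirtinger type inequality, I would argue by contradiction and compactness. Suppose the inequality fails. Then there exists a sequence $\{u_k\} \subset W^{1,p}(\Omega)$ with $\int_\Omega u_k\,d\lambda = 0$, $\|u_k\|_{L^p(\Omega)} = 1$, and $\|\nabla u_k\|_{L^p(\Omega)} \to 0$, and in particular this sequence is bounded in $W^{1,p}(\Omega)$. The hypotheses that $\Omega$ is a bounded $n$-set with $W^1_p(\Omega) = C^1_p(\Omega)$ place us in the framework of Definition \ref{d.3}, so there is a bounded extension operator $E : W^{1,p}(\Omega) \to W^{1,p}(\mathbb{R}^n)$. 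Extending each $u_k$, multiplying by a fixed cutoff equal to one on a neighborhood of $\overline{\Omega}$, and invoking the classical Rellich-Kondrachov theorem on a large ball produces a subsequence $u_{k_j}$ converging in $L^p(\Omega)$ to some $u$ with $\|u\|_{L^p(\Omega)} = 1$ and $\int_\Omega u\,d\lambda = 0$. Since $\nabla u_{k_j} \to 0$ in $L^p(\Omega)$, we have $\nabla u = 0$ distributionally; connectedness of $\Omega$ (implicit in ``domain'') forces $u$ to be constant, and the vanishing mean then forces $u \equiv 0$, contradicting $\|u\|_{L^p(\Omega)} = 1$.

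The main obstacle is the compact embedding $W^{1,p}(\Omega) \hookrightarrow L^p(\Omega)$, which is not the classical Rellich-Kondrachov statement since here $\partial\Omega$ need only be a $d$-set, not Lipschitz. This is precisely the point at which the admissibility-type hypothesis $W^1_p(\Omega) = C^1_p(\Omega)$ is used, through the resulting bounded extension operator to $W^{1,p}(\mathbb{R}^n)$. Once this compactness is secured, the remaining limit-and-constant argument is routine.
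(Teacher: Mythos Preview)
The paper does not supply a proof of this theorem: it is stated with the attribution ``(\cite{DRP-22})'' and is quoted as a known result from the literature, with no argument given. So there is no ``paper's own proof'' to compare against.

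Your proposal is a correct and standard route to the result. The slab argument for $W^{1,p}_0(\Omega)$ is the classical proof and goes through as written. For the Poincar\'e--Wirtinger part, the contradiction-plus-compactness scheme is also standard; the only nontrivial ingredient is the compact embedding $W^{1,p}(\Omega)\hookrightarrow L^p(\Omega)$, and you correctly locate this in the extension property. One small remark: you invoke Definition~\ref{d.3} (admissible domain), but the theorem's hypothesis is weaker---only that $\Omega$ is a bounded $n$-set with $W^1_p(\Omega)=C^1_p(\Omega)$, without any assumption on $\partial\Omega$. This is still enough: as noted parenthetically in Definition~\ref{d.3}, the equality $W^1_p(\Omega)=C^1_p(\Omega)$ for an $n$-set already forces $\Omega$ to be a $W^1_p$-extension domain (this is a theorem of Haj\l asz--Koskela--Tuominen / Shvartsman type), and that is all your compactness step needs. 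With that clarification, the argument is complete.
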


\section{Homogeneous Dirichlet boundary condition}

We start by considering the wave equation for Laplace operator with spatial potential, initial data and homogeneous Dirichlet boundary condition
\begin{equation}\label{nonh-p}
    \left\{\begin{array}{l}
         \partial^2_tu-\Delta u+V(x)u=f(t,x),\quad (t,x)\in [0,T]\times\Omega, \, \Omega\subset \mathbb{R},\\
         u(0,x)=u_0(x),\quad x\in \Omega,\\
         \partial_t u(0,x)=u_1(x),\quad x\in \Omega,\\
         u(t,x)=0, \quad (t,x)\in [0,T]\times\partial \Omega,
    \end{array}\right.
\end{equation}
where $V\geq 0$, $V\in L^\infty(\Omega)$ and $f\in L^2([0,T];\Omega)$ are given functions. 

To build a weak solution of the problem \eqref{nonh-p} we use Galerkin's method, as in \cite{Evans}.

We are looking for weak solution of the problem \eqref{nonh-p} in the following sense: 
\begin{defn}\label{weak-s}
    For $f\in L^2([0,T];L^2(\Omega))$, $u_0\in H^1_0(\Omega)$ and $u_1\in L^2(\Omega)$, we define $u\in L^2([0,T];H^1_0(\Omega))$ with $\partial_tu\in L^\infty([0,T];L^2(\Omega))$ and  $\partial^2_t u\in L^2([0,T];H^{-1}(\Omega))$ as a weak solution to the problem \eqref{nonh-p} if it satisfies the initial conditions $u(0)=u_0$, $\partial_t u(0)=u_1$, and for all $v \in L^2([0,T];H^1_0(\Omega))$, the following holds:
    \begin{align}\label{weaksol}
        \int\limits_0^T\left(\langle \partial^2_t u,v \rangle_{(H^{-1}(\Omega), H^1_0(\Omega))}+(\nabla u,\nabla v)_{L^2(\Omega)}+(\sqrt{V}u,\sqrt{V}v)_{L^2(\Omega)}\right)ds \nonumber \\
        =\int\limits_0^T (f,v)_{L^2(\Omega)}ds.
    \end{align}
\end{defn}

\subsection{Galerkin approximations}
We employ Galerkin's method by selecting smooth functions $w_k=w_k(x),$ $k=1,2,\ldots$ such that 
\begin{equation}\label{ort-basis}
    \left\{w_k\right\}_{k=1}^\infty\, \text{is an orthogonal basis of }H^1_0(\Omega)
\end{equation}
and
\begin{equation}\label{ortn-basis}
    \left\{w_k\right\}_{k=1}^\infty\, \text{is an orthonormal basis of }L^2(\Omega).
\end{equation}

On the other hand, we can take $\{w_k\}_{k=1}^\infty$ as the eigenfunctions of the Laplacian $-\Delta$ on the bounded domain $\Omega\subset \mathbb{R}^n$ with homogeneous Dirichlet boundary conditions, which solve the equation
\[
-\Delta w_k=\lambda_kw_k,
\]
in the weak sense
\begin{equation}\label{3.4'}
    \forall w\in H^1_0(\Omega)\quad (\nabla w_k,\nabla w)_{L^2(\Omega)}=\lambda_k (w_k,w)_{L^2(\Omega)}.
\end{equation}

For a fixed positive integer $m$, we write
\begin{equation}\label{u_m}
     u_m(t,x):=\sum\limits_{k=1}^m d^k_m(t)w_k(x),
\end{equation}
where we determine the coefficients $d^k_m(t)$ ($0\leq t\leq T$, $k=1,\ldots,m$) as follows:
\begin{equation}\label{coef-d_m}
    d^k_m(0)=(u_0,w_k)_{L^2(\Omega)}, \quad k=1,\ldots,m,
\end{equation}
\begin{equation}\label{coef-d'_m}
    \partial_td^k_m(0)=(u_1,w_k)_{L^2(\Omega)}, \quad k=1,\ldots,m,
\end{equation}
and
\begin{equation}\label{u_m_tt}
    ( \partial^2_tu_m,w_k)_{L^2(\Omega)}+(\nabla  u_m,\nabla w_k)_{L^2(\Omega)}+(V u_m,w_k)_{L^2(\Omega)}=( f,w_k)_{L^2(\Omega)},
\end{equation}
for $0\leq t\leq T, \, k=1,\ldots,m.$
\begin{prop}\label{prop1}
    For each integer $m=1,2,\ldots,$ there exists a unique function $ u_m$ of the form \eqref{u_m} satisfying \eqref{coef-d_m}-\eqref{u_m_tt}.
\end{prop}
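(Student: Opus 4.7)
The plan is to reduce the Galerkin equation \eqref{u_m_tt} together with the initial data \eqref{coef-d_m}--\eqref{coef-d'_m} to a linear system of second-order ordinary differential equations for the coefficient vector $(d^1_m(t),\dots,d^m_m(t))$, and then invoke standard existence/uniqueness theory for linear ODEs with $L^2$-in-time right-hand side.

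First, substitute the ansatz \eqref{u_m} into \eqref{u_m_tt}. Using the $L^2$-orthonormality \eqref{ortn-basis}, the first term collapses to $\partial_t^2 d^k_m(t)$. The remaining spatial terms become
\begin{equation*}
(\nabla u_m,\nabla w_k)_{L^2(\Omega)} + (Vu_m,w_k)_{L^2(\Omega)} = \sum_{j=1}^m A_{kj}\, d^j_m(t),
\end{equation*}
with the constant matrix $A_{kj}:=(\nabla w_j,\nabla w_k)_{L^2(\Omega)}+(Vw_j,w_k)_{L^2(\Omega)}$, which is well defined since $V\in L^\infty(\Omega)$ and $w_j\in H^1_0(\Omega)$. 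Writing $F_k(t):=(f(t,\cdot),w_k)_{L^2(\Omega)}$, the system \eqref{u_m_tt} is equivalent to
\begin{equation*}
\partial_t^2 d^k_m(t) + \sum_{j=1}^m A_{kj}\, d^j_m(t) = F_k(t),\qquad k=1,\dots,m,
\end{equation*}
with initial data prescribed by \eqref{coef-d_m} and \eqref{coef-d'_m}. Note that $F_k\in L^2([0,T])$ by the Cauchy--Schwarz inequality since $f\in L^2([0,T];L^2(\Omega))$ and $\|w_k\|_{L^2(\Omega)}=1$.

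Next, recast this as a first-order linear system in $\mathbb{R}^{2m}$ by setting $Y(t)=(d^1_m,\dots,d^m_m,\partial_t d^1_m,\dots,\partial_t d^m_m)^\top$. It takes the form $Y'(t)=\mathcal{A}\,Y(t)+G(t)$ with $\mathcal{A}$ a constant $2m\times 2m$ matrix and $G\in L^2([0,T];\mathbb{R}^{2m})$. Standard Carathéodory theory for linear ODEs (or, equivalently, the variation-of-constants formula $Y(t)=e^{t\mathcal{A}}Y(0)+\int_0^t e^{(t-s)\mathcal{A}}G(s)\,\d s$) yields a unique absolutely continuous solution on $[0,T]$, with $d^k_m\in H^2([0,T])$ in the sense that $\partial_t d^k_m$ is absolutely continuous and $\partial_t^2 d^k_m\in L^2([0,T])$. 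The prescribed initial data \eqref{coef-d_m}--\eqref{coef-d'_m} fix $Y(0)$, ensuring uniqueness.

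There is no serious obstacle: the argument is purely linear-algebraic plus an elementary ODE existence theorem. The only point requiring the slightest care is that the forcing $F_k$ is merely $L^2$ in time rather than continuous, so one should cite Carathéodory's theorem (or directly use the variation-of-constants representation, which visibly belongs to $H^2([0,T])$ when $F_k\in L^2([0,T])$) rather than the classical Picard--Lindel\"of statement. This provides the unique $u_m$ of the form \eqref{u_m} satisfying \eqref{coef-d_m}--\eqref{u_m_tt}.
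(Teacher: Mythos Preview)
Your proof is correct and follows essentially the same approach as the paper: substitute the ansatz \eqref{u_m} into \eqref{u_m_tt}, use the $L^2$-orthonormality to reduce to a linear constant-coefficient second-order ODE system for $(d^1_m,\dots,d^m_m)$, and invoke standard ODE existence/uniqueness theory. Your treatment is slightly more detailed in explicitly passing to a first-order system and citing Carath\'eodory theory to handle the merely $L^2$-in-time forcing, whereas the paper simply appeals to ``standard theory for ordinary differential equations,'' but the substance is the same.
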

\begin{proof}
    Let $ u_m$ be given by \eqref{u_m}. Then taking into account \eqref{ortn-basis}, we have
    \begin{equation*}
        \left( \partial^2_tu_m, w_k\right)_{L^2(\Omega)} = {d_m^k}''(t).
    \end{equation*}
    Furthermore, we have
    \[
    e^{lk}:=\left(\nabla w_l,\nabla w_k\right)_{L^2(\Omega)},\quad g^{lk}:=\left(Vw_l,w_k\right)_{L^2(\Omega)},\quad f^{k}(t):=( f,w_k)_{L^2(\Omega)}\in L^2[0,T]
    \]
    for $l,k=1,\ldots,m.$ Consequently, relation \eqref{u_m_tt} becomes the linear system of ODE
    \begin{equation}\label{d''}
    {d^k_m}''(t)+\sum\limits_{l=1}^m\left(e^{lk}+ g^{lk}\right)d^l_m(t) = f^k(t),\quad t\in [0,T],\, k=1,\ldots m,
    \end{equation}
    with the initial conditions \eqref{coef-d_m} and \eqref{coef-d'_m}. According to the standard theory for ordinary differential equations (see, e.g., \cite{Tesch}), there exists a unique function 
    \[
    \mathbf{d}_m(t)=(d^1_m(t),\ldots,d^m_m(t)),  
    \]
    satisfying \eqref{coef-d_m}, \eqref{coef-d'_m} and solving \eqref{d''} for $0\leq t\leq T$.
\end{proof}

\subsection{Energy estimates} Now, we need some estimates, uniform in $m$ at $m\to \infty$.
\begin{thm}\label{energy estimates}
    There exists a constant $C>0$, depending only on $T$, $\Omega$, such that
    \begin{align}\label{energy est}
        \sup\limits_{0\leq t\leq T}& \left(\| \partial_tu_m\|_{L^2(\Omega)}+\| u_m\|_{H^1_0(\Omega)}+\|\sqrt{V} u_m\|_{L^2(\Omega)}\right)+\| \partial_t^2u_m\|_{L^2([0,T],H^{-1}(\Omega))}\nonumber\\
    &\leq C\left( \| f\|_{L^2([0,T];L^2(\Omega))}+\left(\|V\|^{\frac{1}{2}}_{L^\infty(\Omega)}+\|V\|_{L^\infty(\Omega)}\right)\|u_0\|_{L^2(\Omega)}\nonumber\right.\\
    &+\|u_0\|_{H^1_0(\Omega)}+\|u_1\|_{L^2(\Omega)}\Big),
    \end{align}
    with the constant $C$ independent of $m$.
\end{thm}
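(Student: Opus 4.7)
The plan is to apply the classical Galerkin energy method for second-order hyperbolic equations. Multiplying the ODE in \eqref{u_m_tt} by $\partial_t d^k_m(t)$, summing over $k=1,\ldots,m$, and using $\partial_t u_m=\sum_k\partial_t d^k_m\,w_k$, I obtain
\[
(\partial_t^2 u_m,\partial_t u_m)_{L^2(\Omega)}+(\nabla u_m,\nabla\partial_t u_m)_{L^2(\Omega)}+(Vu_m,\partial_t u_m)_{L^2(\Omega)}=(f,\partial_t u_m)_{L^2(\Omega)}.
\]
Since $V$ is time-independent and nonnegative, each term on the left is half a total time derivative. Setting
\[
E(t):=\|\partial_t u_m(t)\|_{L^2(\Omega)}^2+\|\nabla u_m(t)\|_{L^2(\Omega)}^2+\|\sqrt{V}\,u_m(t)\|_{L^2(\Omega)}^2,
\]
Cauchy--Schwarz and Young's inequality give $E'(t)\leq\|f(t)\|_{L^2(\Omega)}^2+E(t)$, and Gronwall's lemma on $[0,T]$ yields
\[
\sup_{0\leq t\leq T}E(t)\leq e^T\!\left(E(0)+\|f\|_{L^2([0,T];L^2(\Omega))}^2\right).
\]

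To estimate $E(0)$ from the data, I use that $\{w_k\}$ is orthonormal in $L^2(\Omega)$ and orthogonal in $H^1_0(\Omega)$. The choices \eqref{coef-d_m}--\eqref{coef-d'_m} express $u_m(0)$ and $\partial_t u_m(0)$ as orthogonal projections of $u_0$ and $u_1$, so Bessel's inequality gives $\|u_m(0)\|_{L^2(\Omega)}\leq\|u_0\|_{L^2(\Omega)}$, $\|u_m(0)\|_{H^1_0(\Omega)}\leq\|u_0\|_{H^1_0(\Omega)}$ and $\|\partial_t u_m(0)\|_{L^2(\Omega)}\leq\|u_1\|_{L^2(\Omega)}$. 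The potential contribution is then bounded by $\|\sqrt{V}\,u_m(0)\|_{L^2(\Omega)}\leq\|V\|_{L^\infty(\Omega)}^{1/2}\|u_0\|_{L^2(\Omega)}$, which is the source of the $\|V\|_{L^\infty}^{1/2}\|u_0\|_{L^2}$ factor in \eqref{energy est}. Taking square roots and combining delivers the supremum part of \eqref{energy est}.

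For the last term $\|\partial_t^2 u_m\|_{L^2([0,T];H^{-1}(\Omega))}$, I argue by duality. Given $v\in H^1_0(\Omega)$ with $\|v\|_{H^1_0(\Omega)}\leq 1$, decompose $v=v^1+v^2$ where $v^1$ is the $H^1_0$-orthogonal projection onto $\spn\{w_1,\ldots,w_m\}$; since the $w_k$ are mutually orthogonal in both $L^2$ and $H^1_0$, this $v^1$ is also the $L^2$-orthogonal projection, so $\langle\partial_t^2 u_m,v\rangle=(\partial_t^2 u_m,v^1)_{L^2(\Omega)}$. Extending \eqref{u_m_tt} by linearity in the test function gives
\[
(\partial_t^2 u_m,v^1)_{L^2(\Omega)}=-(\nabla u_m,\nabla v^1)_{L^2(\Omega)}-(\sqrt{V}\,u_m,\sqrt{V}\,v^1)_{L^2(\Omega)}+(f,v^1)_{L^2(\Omega)}.
\]
Cauchy--Schwarz, the pointwise bound $\|\sqrt{V}\,v^1\|_{L^2}\leq\|V\|_{L^\infty}^{1/2}\|v^1\|_{L^2}$, and Poincaré's inequality (Theorem \ref{Poincare}) together with $\|v^1\|_{H^1_0}\leq 1$ yield the pointwise estimate $\|\partial_t^2 u_m(t)\|_{H^{-1}(\Omega)}\leq C\bigl(\|\nabla u_m\|_{L^2}+\|V\|_{L^\infty}^{1/2}\|\sqrt{V}\,u_m\|_{L^2}+\|f\|_{L^2}\bigr)$. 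Squaring, integrating in $t$, and inserting the bounds from the first part produces the $\|V\|_{L^\infty}\|u_0\|_{L^2}$ contribution via the product $\|V\|_{L^\infty}^{1/2}\cdot\|V\|_{L^\infty}^{1/2}$.

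The main technical nuisance I anticipate is tracking the two distinct powers of $\|V\|_{L^\infty}$ in \eqref{energy est}: the $\|V\|_{L^\infty}^{1/2}\|u_0\|_{L^2}$ factor emerges naturally at $t=0$ from $\|\sqrt{V}\,u_0\|_{L^2}$, but the $\|V\|_{L^\infty}\|u_0\|_{L^2}$ factor surfaces only when re-expressing $\|\sqrt{V}\,u_m\|_{L^2}$-bounds in terms of $\|u_0\|_{L^2}$ inside the $H^{-1}$-duality step. Ensuring that all constants depend only on $T$ and $\Omega$ and are independent of $m$ requires a final audit of the Gronwall constant $e^T$ and of the Poincaré constant from Theorem \ref{Poincare}.
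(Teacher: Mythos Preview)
Your proposal is correct and follows essentially the same route as the paper: multiply \eqref{u_m_tt} by $\partial_t d^k_m$, sum, recognize the total time derivative of the energy $E(t)$, apply Gronwall, then obtain the $H^{-1}$ bound on $\partial_t^2 u_m$ by duality against the finite-dimensional projection of a test function. The only cosmetic difference is that in the duality step the paper bounds $(Vu_m,b_1)$ via $\|V\|_{L^\infty}\|u_m\|_{L^2}$ and then Poincar\'e on $u_m$, whereas you bound $(\sqrt{V}u_m,\sqrt{V}v^1)$ via $\|V\|_{L^\infty}^{1/2}\|\sqrt{V}u_m\|_{L^2}$ and Poincar\'e on $v^1$; both lead to the same $\|V\|_{L^\infty}\|u_0\|_{L^2}$ contribution in \eqref{energy est}.
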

\begin{proof}
    Multiplying the equality \eqref{u_m_tt} by ${d^k_m}'(t)$, summing over $k=1,\ldots,m$, and taking into account \eqref{u_m}, we obtain 
    \begin{equation}\label{e1}
       (\partial_t^2u_m, \partial_tu_m)+(\nabla u_m,\nabla \partial_tu_m)+(V u_m, \partial_tu_m)=( f, \partial_tu_m)
    \end{equation}
    for a.e. $0\leq t\leq T$. For the first and second terms of \eqref{e1}, we have respectively
    $\Re(\partial_t^2u_m, \partial_tu_m)=\frac{d}{dt}\left(\frac{1}{2}\| \partial_tu_m\|^2_{L^2(\Omega)}\right)$ and $\Re(\nabla  u_m,\nabla \partial_tu_m)=\frac{d}{dt}\left(\frac{1}{2}\|\nabla  u_m\|^2_{L^2(\Omega)}\right)$. For the last term of the left hand side of \eqref{e1}, taking into account that $V\geq0$ on $\Omega$, we get
    \[
    \Re(V u_m, \partial_tu_m)=\frac{d}{dt}\left(\frac{1}{2}\left(\sqrt{V} u_m,\sqrt{V} u_m\right)\right)=\frac{d}{dt}\left(\frac{1}{2}\left\|\sqrt{V} u_m\right\|^2_{L^2(\Omega)}\right).
    \]
    Using the Cauchy-Schwartz and Young inequalities successively for the right hand side of \eqref{e1}, we find
    \[
    \Re( f, \partial_tu_m)\leq \| f\|_{L^2(\Omega)}\| \partial_tu_m\|_{L^2(\Omega)}\leq \frac{1}{2}\left(\| f\|^2_{L^2(\Omega)}+\| \partial_tu_m\|^2_{L^2(\Omega)}\right).
    \]
    Considering the last expressions, we arrive at the inequality
    \begin{align}\label{3.12}
        \frac{d}{dt}&\left(\| \partial_tu_m\|^2_{L^2(\Omega)}+\| u_m\|^2_{H^1_0(\Omega)}+\left\|\sqrt{V} u_m\right\|^2_{L^2(\Omega)}\right)\leq \| f\|^2_{L^2(\Omega)}+\| \partial_tu_m\|^2_{L^2(\Omega)}\nonumber\\
        &\leq \| f\|^2_{L^2(\Omega)}+\| \partial_tu_m\|^2_{L^2(\Omega)}+\| u_m\|^2_{H^1_0(\Omega)}+\left\|\sqrt{V} u_m\right\|^2_{L^2(\Omega)}.
    \end{align}
    Now we write 
    \begin{equation}\label{eta}
        \eta(t):=\| \partial_tu_m(t)\|^2_{L^2(\Omega)} +\| u_m(t)\|^2_{H^1_0(\Omega)}+\left\| \sqrt{V} u_m(t)\right\|^2_{L^2(\Omega)}
    \end{equation}
    and 
    \begin{equation}\label{xi}
        \xi(t):=\| f(t)\|^2_{L^2(\Omega)}.
    \end{equation}
Then from the inequality \eqref{3.12} we have
\[
\eta'(t)\leq \eta(t)+\xi(t)
\]
for $0\leq t\leq T$. Thus Gronwall's inequality leads us to the estimate
\begin{equation}\label{Gron}
    \eta(t)\leq e^{t}\left(\eta(0)+\int\limits_0^t\xi(s)ds\right), \quad 0\leq t\leq T.
\end{equation}
According to \eqref{coef-d_m} and \eqref{coef-d'_m} and  $V\geq 0$, $V\in L^\infty(\Omega)$, we obtain
\begin{align*}
\eta(0)&=\| \partial_tu_m(0)\|^2_{L^2(\Omega)} +\| u_m(0)\|^2_{H^1_0(\Omega)}+\left\| \sqrt{V} u_m(0)\right\|^2_{L^2(\Omega)}\\
&= \|u_1\|^2_{L^2(\Omega)}+\|u_0\|^2_{H^1_0(\Omega)}+\|\sqrt{V}u_0\|^2_{L^2(\Omega)}\\
&\leq \|u_1\|^2_{L^2(\Omega)}+\|u_0\|^2_{H^1_0(\Omega)}+C\|V\|_{L^\infty(\Omega)}\|u_0\|^2_{L^2(\Omega)}.
\end{align*}
Thus, formulas \eqref{eta}-\eqref{Gron} provide us with
    \begin{align*}
    \| \partial_tu_m(t,\cdot)&\|^2_{L^2(\Omega)}+\|  u_m(t,\cdot)\|^2_{H^1_0(\Omega)}+\left\|\sqrt{V(\cdot)} u_m(t,\cdot)\right\|^2_{L^2(\Omega)}\\
    &\leq e^t\left(\int_0^t\|f(s,\cdot)ds\|^2_{L^2(\Omega)}+\|u_0\|^2_{H^1_0(\Omega)}+\|u_1\|^2_{L^2(\Omega)}+C\|V\|_{L^\infty(\Omega)}\|u_0\|^2_{L^2(\Omega)}\right).
    \end{align*}
    Since $0\leq t\leq T$ is arbitrary and $T<\infty$, $e^t\leq e^T=C$, we see from this the following estimate
    \begin{align}\label{max}
    \sup\limits_{0\leq t\leq T}& \left(\| \partial_tu_m\|^2_{L^2(\Omega)}+\|\nabla  u_m\|^2_{L^2(\Omega)}+\|\sqrt{V} u_m\|^2_{L^2(\Omega)}\right)\nonumber\\
    &\lesssim \| f\|^2_{L^2([0,T];L^2(\Omega))}+\|V\|_{L^\infty(\Omega)}\|u_0\|^2_{L^2(\Omega)}+\|u_0\|^2_{H^1_0(\Omega)}+\|u_1\|^2_{L^2(\Omega)},
    \end{align}
    where the constant depends only on $\Omega$ and $T$.

    Fixing any $b\in H^1_0(\Omega)$, $\|b\|_{H^1_0(\Omega)}\leq 1$, we write $b=b_1+b_2$, where $b_1\in \spn\{w_k\}_{k=1}^m$ and $(b_2,w_k)=0$ for $k=1,\ldots m$. Note that $\|b_1\|_{H^1_0(\Omega)}\leq 1$. Then by using \eqref{u_m} and \eqref{u_m_tt}, we get
    \begin{align*}
        \langle  \partial_t^2u_m,b\rangle_{(H^{-1}(\Omega),H^1_0(\Omega))}&=( \partial_t^2u_m,b)_{L^2(\Omega)}=( \partial_t^2u_m,b_1)_{L^2(\Omega)}\\
        &=( f,b_1)_{L^2(\Omega)}-(\nabla u_m,\nabla b_1)_{L^2(\Omega)}-(V u,b_1)_{L^2(\Omega)}.
    \end{align*}
    By using the Cauchy-Schwartz inequality and the Poincaré inequality (Theorem \ref{Poincare}), we obtain
    \begin{align*}
        |\langle  \partial_t^2u_m,b\rangle_{(H^{-1}(\Omega),H^1_0(\Omega))}|\lesssim \| f\|_{L^2(\Omega)}\|b_1\|_{L^2(\Omega)}+\int_\Omega\left|\nabla u_m\cdot\overline{\nabla b_1}\right|dx+\int_\Omega\left|V u_m\overline{b_1}\right|dx\\
        \lesssim \| f\|_{L^2(\Omega)}\|b_1\|_{H^1_0(\Omega)} +\| u_m\|_{H^1_0(\Omega)}\|b_1\|_{H^1_0(\Omega)} +\|V u_m\|_{L^2(\Omega)}\|b_1\|_{H^1_0(\Omega)}
    \end{align*}
    and since $\|b_1\|_{H^1_0(\Omega)}\leq 1$, we get
    \[
    |\langle  \partial_t^2u_m,b\rangle_{(H^{-1}(\Omega),H^1_0(\Omega))}|\lesssim \| f\|_{L^2(\Omega)} +\| u_m\|_{H^1_0(\Omega)}+\|V\|_{L^\infty(\Omega)}\| u_m\|_{L^2(\Omega)}.
    \]
    By using estimate \eqref{max} and Poincaré inequality (Theorem \ref{Poincare}), we have
    \[
    \|u_m\|_{L^2(\Omega)}\leq C\|u_m\|_{H^1_0(\Omega)},
    \]
    and considering this, we deduce
    \begin{align*}
     \int\limits_0^T \| \partial_t^2u_m\|^2_{H^{-1}(\Omega)}dt &\lesssim \int\limits_0^T\left(\| f\|^2_{L^2(\Omega)} +\| u_m\|^2_{H^1_0(\Omega)}+\|V\|^2_{L^\infty(\Omega)}\| u_m\|^2_{L^2(\Omega)}\right)dt\\
     &\lesssim \| f\|^2_{L^2([0,T];L^2(\Omega))} +\|u_0\|^2_{H^1_0(\Omega)}+\|V\|^2_{L^\infty(\Omega)}\|u_0\|^2_{L^2(\Omega)}+\|u_1\|^2_{L^2(\Omega)}.   
    \end{align*}
    Combining with estimate \eqref{max}, we obtain \eqref{energy est}.
\end{proof}

We establish the following well-posedness result:
\begin{thm}\label{thm3.3}
For any bounded domain $\Omega$, there exists a unique solution to the wave equation \eqref{nonh-p} with the homogeneous Dirichlet boundary condition, as defined in Definition \ref{weak-s}.
\end{thm}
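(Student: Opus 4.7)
The plan is to follow the standard Galerkin procedure: extract a weak limit from the approximations $u_m$ supplied by Proposition \ref{prop1}, verify that the limit satisfies Definition \ref{weak-s}, and then prove uniqueness via an energy identity. The uniform bound \eqref{energy est} does most of the heavy lifting for the existence part.

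For existence, I would first extract a subsequence (not relabeled) such that $u_m \rightharpoonup^{*} u$ in $L^\infty([0,T];H^1_0(\Omega))$, $\partial_t u_m \rightharpoonup^{*} \partial_t u$ in $L^\infty([0,T];L^2(\Omega))$, and $\partial_t^2 u_m \rightharpoonup \partial_t^2 u$ in $L^2([0,T];H^{-1}(\Omega))$, using the Banach--Alaoglu theorem applied to \eqref{energy est}. Next, fixing $N\in\mathbb{N}$ and taking test functions of the form $v(t,x)=\sum_{k=1}^N\phi_k(t)w_k(x)$ with $\phi_k\in C^1([0,T])$, I would multiply \eqref{u_m_tt} by $\phi_k(t)$, sum over $k\le N$, integrate in $t$, and pass to the limit $m\to\infty$ (valid once $m\ge N$). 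Then the density of $\bigcup_N \mathrm{span}\{w_1,\dots,w_N\}$ in $H^1_0(\Omega)$ coming from \eqref{ort-basis} extends \eqref{weaksol} to every $v\in L^2([0,T];H^1_0(\Omega))$.

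The initial conditions $u(0)=u_0$ and $\partial_t u(0)=u_1$ would be recovered by choosing test functions $\phi_k$ that vanish at $t=T$ with $\phi_k(0)=1$ (respectively $\phi_k'(0)=1$, $\phi_k(0)=0$), integrating by parts in time in both the pre-limit and post-limit identities, and comparing the boundary contributions against \eqref{coef-d_m}, \eqref{coef-d'_m}; the necessary weak continuity $u\in C_w([0,T];H^1_0(\Omega))$ and $\partial_t u\in C_w([0,T];L^2(\Omega))$ follows from the regularity classes already obtained.

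The hard part, and the one I would save for last, is \emph{uniqueness}: since $\partial_t u$ does not lie in $L^2([0,T];H^1_0(\Omega))$, one cannot simply test the equation against it. Let $w$ denote the difference of two weak solutions with identical data, so $w$ solves the homogeneous problem with $f\equiv 0$ and $w(0)=\partial_t w(0)=0$. Following the device in \cite{Evans}, I would fix $s\in(0,T]$ and use the antiderivative test function
\[
v(t,x)=\begin{cases}\displaystyle\int_t^s w(\tau,x)\,d\tau, & 0\le t\le s,\\ 0, & s\le t\le T,\end{cases}
\]
which belongs to $L^2([0,T];H^1_0(\Omega))$ and satisfies $\partial_t v=-w$ on $[0,s]$. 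Substituting $v$ into the homogeneous version of \eqref{weaksol} and integrating by parts carefully in $t$ (the main technical point, justified by approximation in the energy classes above) produces an identity of the form
\[
\tfrac12\|\partial_t w(s)\|_{L^2(\Omega)}^2+\tfrac12\|\nabla v(0)\|_{L^2(\Omega)}^2+\tfrac12\|\sqrt{V}\,v(0)\|_{L^2(\Omega)}^2=0,
\]
which forces $w\equiv 0$ on $[0,T]$ and completes the proof.
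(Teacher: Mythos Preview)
Your approach is essentially identical to the paper's: extract weak limits from the Galerkin sequence using \eqref{energy est}, pass to the limit in \eqref{u_m_tt} against finite combinations of the $w_k$ and extend by density (the paper simply cites \cite{Evans}, p.~405 for this step), and prove uniqueness via the antiderivative test function $v(t)=\int_t^s w(\tau)\,d\tau$. One slip in your final identity: after integrating $\langle\partial_t^2 w,v\rangle$ by parts once and using $\partial_t v=-w$, the first term becomes $\int_0^s(\partial_t w,w)\,dt=\tfrac12\|w(s)\|_{L^2}^2$, so the identity should read $\tfrac12\|w(s)\|_{L^2}^2+\tfrac12\|\nabla v(0)\|_{L^2}^2+\tfrac12\|\sqrt{V}\,v(0)\|_{L^2}^2=0$ (with $w(s)$, not $\partial_t w(s)$), which directly gives $w\equiv 0$.
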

\begin{proof}
    We start to prove existence. Based on the energy estimates \eqref{energy est}, the nets $\{ \partial_tu_m\}_{m=1}^\infty$, $\{\sqrt{V} u_m\}_{m=1}^\infty$ are bounded in $L^2([0,T];L^2(\Omega))$, $\{ u_m\}_{m=1}^\infty$ is bounded in $L^2([0,T];H^1_0(\Omega))$, and $\{ \partial_t^2u_m\}_{m=1}^\infty$ is bounded in $L^2([0,T];H^{-1}(\Omega))$. Then, there exists a subsequence $\{ u_{m_l}\}_{l=1}^\infty\subset \{ u_m\}_{m=1}^\infty$ and a function $ u\in L^2([0,T];H^1_0(\Omega))$, with $\sqrt{V} u,\, \partial_tu\in L^2([0,T];L^2(\Omega))$, $ \partial^2_tu\in L^2([0,T];H^{-1}(\Omega))$, such that 
    \begin{equation}\label{existweak}
        \left\{\begin{array}{l}
              u_{m_l}\rightharpoonup  u \quad\text{weakly in } L^2([0,T];H^1_0(\Omega)) \\
              \partial_tu_{m_l}\rightharpoonup  \partial_tu \quad \text{weakly in } L^2([0,T];L^2(\Omega))\\
             \sqrt{V} u_{m_l}\rightharpoonup \sqrt{V} u \quad \text{weakly in } L^2([0,T];L^2(\Omega))\\
              \partial^2_tu_{m_l}\rightharpoonup  \partial^2_tu \quad \text{weakly in } L^2([0,T];H^{-1}(\Omega)).
        \end{array}\right.
    \end{equation}

    In the same way as in \cite{Evans}, p. 405, we show that $ u$ is a weak solution of the initial-boundary problem \eqref{nonh-p}.

    Let us prove uniqueness of the solution. By the linearity of the problem, it suffices to show that the only weak solution of \eqref{nonh-p} with $f\equiv 0$ and $u_0\equiv u_1\equiv 0$ is $u\equiv 0$. To show this, let us denote for a fixed $0\leq s\leq T$,
    \begin{equation}\label{v(s)}
    v(t):=\left\{\begin{array}{ll}
         \int_t^s u(\tau)d\tau,&0\leq t\leq s,  \\
         0,& s\leq t\leq T.
    \end{array}\right.
    \end{equation}
    Following that $v(t)\in H^1_0(\Omega)$ for all $0\leq t\leq T$, and consequently
    \[
    \int\limits_0^s\left(\langle \partial^2_tu,v\rangle_{(H^{-1}(\Omega),H^1_0(\Omega))} +(\nabla u,\nabla v)_{L^2(\Omega)}+(Vu,v)_{L^2(\Omega)}\right)dt=0.
    \]
    Since $\partial_tu(0)=v(s)=0$, integrating by parts the first term above, we get
    \[
    \int\limits_0^s\left(-(\partial_tu,\partial_tv)_{L^2(\Omega)}+(\nabla u,\nabla v)_{L^2(\Omega)}+(Vu,v)_{L^2(\Omega)}\right)dt=0.
    \]
    From \eqref{v(s)} we obtain $\partial_tv=-u$ $(0\leq t\leq s)$, so that
    \[
    \int\limits_0^s\left((\partial_tu,u)_{L^2(\Omega)}-(\nabla \partial_t v,\nabla v)_{L^2(\Omega)}-\left(\sqrt{V}\partial_t v,\sqrt{V} v\right)_{L^2(\Omega)}\right)dt=0.
    \]
    Furthermore, by using the property of the partial derivative, we obtain
    \[
    \int\limits_0^s\frac{\partial}{\partial t}\left(\frac{1}{2}(u,u)_{L_2(\Omega)}-\frac{1}{2}(\nabla v,\nabla v)_{L^2(\Omega)}-\frac{1}{2}\left(\sqrt{V}v,\sqrt{V}v\right)_{L^2(\Omega)}\right)=0,
    \]
    consequently
    \begin{equation}\label{v(s)_1}
    \|u(s)\|^2_{L^2(\Omega)}+\|\nabla v(0)\|^2_{L^2(\Omega)}+\left\|\sqrt{V}v(0)\right\|^2_{L^2(\Omega)}=0,
    \end{equation}
    since $u(0)=0$, $v(s)=0$.

    Now let us denote 
    \[
    w(t)=\int\limits_0^tu(\tau)d\tau \quad (0\leq t\leq T),
    \]
    which allows us to rewrite \eqref{v(s)_1} in the following form:
     \[
     \|u(s)\|^2_{L^2(\Omega)}+\|\nabla w(s)\|^2_{L^2(\Omega)}+\left\|\sqrt{V}w(s)\right\|^2_{L^2(\Omega)}=0
     \]
     for each $0\leq s\leq T$. This implies $u\equiv0$.
     \end{proof}
\begin{cor}\label{cor3.1} For any bounded domain $\Omega$ and for the weak solution to the equation \eqref{nonh-p}, we have the following estimates:  
\begin{align}\label{partial_tu_m}
\|\partial_tu\|_{L^{\infty}([0,T],L^2(\Omega))}&\lesssim \| f\|_{L^2([0,T];L^2(\Omega))}+\|V\|^{\frac{1}{2}}_{L^\infty(\Omega)} \|u_0\|_{L^2(\Omega)}\nonumber\\
+&\|u_0\|_{H^1_0(\Omega)}+\|u_1\|_{L^2(\Omega)},
\end{align}
    
\begin{align}\label{u_m-L^inf}
\| u\|_{L^\infty([0,T];H^1_0(\Omega))}&\lesssim \| f\|_{L^2([0,T];L^2(\Omega))}+\|V\|^{\frac{1}{2}}_{L^\infty(\Omega)} \|u_0\|_{L^2(\Omega)}\nonumber\\
&+\|u_0\|_{H^1_0(\Omega)}+\|u_1\|_{L^2(\Omega)},
\end{align}

\begin{align}\label{sqrtV-v}
\|\sqrt{V} u\|_{L^\infty([0,T];L^2(\Omega))}&\lesssim \| f\|_{L^2([0,T];L^2(\Omega))}+\|V\|^{\frac{1}{2}}_{L^\infty(\Omega)} \|u_0\|_{L^2(\Omega)}\nonumber\\
&+\|u_0\|_{H^1_0(\Omega)}+\|u_1\|_{L^2(\Omega)},
\end{align}

\begin{align}\label{partial_t^2u_m}
    \|\partial_t^2u\|_{L^2([0,T];H^{-1}(\Omega))}&\lesssim \| f\|_{L^2([0,T];L^2(\Omega))}+\|V\|_{L^\infty(\Omega)} \|u_0\|_{L^2(\Omega)}\nonumber\\
&+\|u_0\|_{H^1_0(\Omega)}+\|u_1\|_{L^2(\Omega)},
\end{align}     

\begin{align}\label{Delta-u}
\|\Delta u\|_{L^2([0,T];H^{-1}(\Omega))}&\lesssim \left(1+\|V\|^\frac{1}{2}_{L^\infty(\Omega)}\right)\left(\|f\|_{L^2([0,T];L^2(\Omega))}+\|u_0\|_{H^1_0(\Omega)}+\|u_1\|_{L^2(\Omega)}\right)\nonumber\\
&+\|V\|_{L^\infty(\Omega)}\|u_0\|_{L^2(\Omega)},  
\end{align}
where the constants depend only on $\Omega$ and $T$.
\end{cor}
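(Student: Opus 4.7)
The first four estimates are read off the energy inequality \eqref{energy est} by passing to the limit along the Galerkin subsequence $\{u_{m_l}\}$ obtained in the proof of Theorem \ref{thm3.3}. The key point is that \eqref{energy est} controls $\{u_m\}$ in $L^\infty([0,T];H^1_0(\Omega))$, both $\{\partial_tu_m\}$ and $\{\sqrt{V}u_m\}$ in $L^\infty([0,T];L^2(\Omega))$, and $\{\partial_t^2 u_m\}$ in $L^2([0,T];H^{-1}(\Omega))$, all with the same right-hand side. Each of these spaces is (isomorphic to) the dual of a separable Banach space, so Banach--Alaoglu yields a further subsequence converging weakly-$*$ in each. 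By uniqueness of weak limits against test functions in $L^1\cap L^2$ of time, these weak-$*$ limits must coincide with the limits $u$, $\partial_tu$, $\sqrt{V}u$, $\partial_t^2 u$ already identified in \eqref{existweak}, and weak(-$*$) lower semicontinuity of the norms then delivers \eqref{partial_tu_m}, \eqref{u_m-L^inf}, \eqref{sqrtV-v}, and \eqref{partial_t^2u_m}.

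The different prefactors $\|V\|^{1/2}_{L^\infty}$ in \eqref{partial_tu_m}--\eqref{sqrtV-v} versus $\|V\|_{L^\infty}$ in \eqref{partial_t^2u_m} correspond to the two places where $V$ enters the proof of Theorem \ref{energy estimates}: the former arises from $\|\sqrt{V}u_0\|_{L^2}\le\|V\|^{1/2}_{L^\infty}\|u_0\|_{L^2}$ in the initial energy $\eta(0)$ and therefore governs the sup-in-time norms, while the latter arises from the duality bound $\|Vu_m\|_{L^2}\le\|V\|_{L^\infty}\|u_m\|_{L^2}$ used to estimate $\partial_t^2 u_m$ in $H^{-1}$, and therefore appears only in the $\partial_t^2 u$ estimate.

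For \eqref{Delta-u}, I would use the equation itself: since $u$ is a weak solution, $-\Delta u = f - \partial_t^2 u - Vu$ in $L^2([0,T];H^{-1}(\Omega))$. Writing $Vu=\sqrt{V}\cdot\sqrt{V}u$ and using $V\ge 0$ gives
\[
\|Vu\|_{L^2([0,T];L^2)}\le \|V\|^{1/2}_{L^\infty}\|\sqrt{V}u\|_{L^2([0,T];L^2)}\le T^{1/2}\|V\|^{1/2}_{L^\infty}\|\sqrt{V}u\|_{L^\infty([0,T];L^2)},
\]
and plugging in \eqref{sqrtV-v} bounds this by a multiple of $\|V\|^{1/2}_{L^\infty}(\|f\|+\|u_0\|_{H^1_0}+\|u_1\|_{L^2})+\|V\|_{L^\infty}\|u_0\|_{L^2}$; combining with \eqref{partial_t^2u_m} and the trivial bound $\|f\|_{L^2H^{-1}}\le C\|f\|_{L^2L^2}$ produces the prefactor $1+\|V\|^{1/2}_{L^\infty}$ multiplying $\|f\|+\|u_0\|_{H^1_0}+\|u_1\|_{L^2}$ and a single $\|V\|_{L^\infty}\|u_0\|_{L^2}$ term, which is exactly \eqref{Delta-u}. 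The only genuinely nontrivial step is the identification, in the first paragraph, of the weak-$*$ $L^\infty$ limits with the weak $L^2$ limits recorded in \eqref{existweak}; everything else is the triangle inequality and the bookkeeping just described.
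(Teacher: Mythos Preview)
Your proposal is correct and follows essentially the same route as the paper. The paper simply writes ``Passing to limits in \eqref{energy est} as $m=m_l\to\infty$'' for \eqref{partial_tu_m}--\eqref{partial_t^2u_m}, and for \eqref{Delta-u} uses the equation together with the same factorization $Vu=\sqrt{V}\cdot\sqrt{V}u$ and the chain $\|Vu\|_{L^2L^2}\le\|\sqrt{V}\|_{L^\infty}\|\sqrt{V}u\|_{L^2L^2}\le \sqrt{T}\,\|V\|^{1/2}_{L^\infty}\|\sqrt{V}u\|_{L^\infty L^2}$ that you outline; your account of the weak-$*$ extraction and limit identification merely makes explicit what the paper's one-line ``passing to limits'' leaves implicit.
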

\begin{proof}
Passing to limits in \eqref{energy est} as $m=m_l\to \infty$, we obtain the estimates \eqref{partial_tu_m}-\eqref{partial_t^2u_m}.

Taking the $H^{-1}(\Omega)$-norm, then integrating over the time $[0,T]$, we get from the equation that
\[
\|\Delta u\|_{L^2([0,T];H^{-1}(\Omega))}\lesssim \|\partial^2_t u\|_{L^2([0,T];H^{-1}(\Omega))} +\|Vu\|_{L^2([0,T];H^{-1}(\Omega))} +\|f\|_{L^2([0,T];H^{-1}(\Omega))}.
\]
Using $0\leq V\in L^\infty(\Omega)$ and taking into account $L^2(\Omega)\subset H^{-1}(\Omega)$, we get
\begin{align*}
    \|Vu\|_{L^2([0,T];H^{-1}(\Omega))}&\leq C\|Vu\|_{L^2([0,T];L^2(\Omega))}\leq C\|\sqrt{V}\|_{L^\infty(\Omega)}\|\sqrt{V}u\|_{L^2([0,T];L^2(\Omega))}\\
    &\leq C\sqrt{T}\|\sqrt{V}\|_{L^\infty(\Omega)}\|\sqrt{V}u\|_{L^\infty([0,T];L^2(\Omega))},
\end{align*}
since
\begin{align*}
    \|\sqrt{V}u\|^2_{L^2([0,T];L^2(\Omega))} &=\int\limits_0^T\|\sqrt{V}u(t)\|^2_{L^2 (\Omega)}dt\leq \int\limits_0^T \esssup\limits_{t\in[0,T]}\| \sqrt{V}u(t)\|^2_{L^2(\Omega)}dt\\
    &=T\cdot \|\sqrt{V}u\|^2_{L^\infty ([0,T];L^2(\Omega))},
\end{align*}
where the constants depend on $\Omega$ and $T$.
We also have
\[
\|f\|_{L^2([0,T];H^{-1}(\Omega))}\leq C \|f\|_{L^2([0,T];L^2(\Omega))},
\]
where the constant $C$ depends only on $\Omega$.
Then, using the estimates \eqref{sqrtV-v} and \eqref{partial_t^2u_m}, we obtain \eqref{Delta-u}. This proves Corollary \ref{cor3.1}.
\end{proof}

\begin{thm}\label{thm3.5}
    Assume that $V\geq 0$, $V\in L^\infty(\Omega)$, $u_0\in H^2_0(\Omega)$, $u_1\in H^1_0(\Omega)$ and $f\in H^1([0,T];L^2(\Omega))$. For any bounded domain $\Omega$, the equation \eqref{nonh-p} has a unique solution satisfying the estimates
    \begin{align}\label{3.23}
        \|\partial^2_t u\|_{L^2([0,T];L^2(\Omega))}&\lesssim \| f\|_{H^1([0,T];L^2(\Omega))}+\left(1+\|V\|^{\frac{1}{2}}_{L^\infty(\Omega)}\right)\|u_1\|_{H^1_0(\Omega)}\nonumber\\
+&\left(1+\|V\|_{L^\infty(\Omega)}\right)\|u_0\|_{H^2_0(\Omega)},
    \end{align}
\begin{align}\label{3.24+}
    \|\Delta u\|_{L^2([0,T];L^2(\Omega))}&\lesssim\left(1+\|V\|^\frac{1}{2}_{L^\infty(\Omega)}\right)\left(\|f\|_{H^1([0,T];L^2(\Omega))}+\|u_0\|_{H^1_0(\Omega)}+\|u_1\|_{H^1_0(\Omega)}\right)\nonumber\\
    +&\left(1+\|V\|_{L^\infty(\Omega)}\right)\|u_0\|_{H^2_0(\Omega)}.
\end{align}    
\end{thm}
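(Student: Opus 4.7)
The plan is to upgrade the energy estimates of Theorem \ref{energy estimates} by one order of regularity in time, and then recover $\Delta u$ algebraically from the equation. Since the basis $\{w_k\}$ in \eqref{3.4'} consists of eigenfunctions of the Dirichlet Laplacian, the Galerkin ODE \eqref{d''} is an autonomous linear system whose $t$-derivative has exactly the same form; setting $\tilde u_m := \partial_t u_m$, this new system is precisely the Galerkin equation for the wave problem \eqref{nonh-p} with source $\partial_t f \in L^2([0,T];L^2(\Omega))$ (using the hypothesis $f\in H^1([0,T];L^2(\Omega))$) and initial data $\tilde u_m(0)=P_m u_1 \in H^1_0(\Omega)$, $\partial_t\tilde u_m(0)=\partial_t^2 u_m(0)$. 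Applying Theorem \ref{energy estimates} verbatim to $\tilde u_m$ will then produce an $L^\infty$-in-time bound on $\partial_t^2 u_m$.

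To carry this out I need a uniform-in-$m$ $L^2$-bound on $\partial_t^2 u_m(0)$. Evaluating \eqref{u_m_tt} at $t=0$ and using that $\{w_k\}$ diagonalises $-\Delta$ on $H^1_0(\Omega)$, so $(\nabla u_m(0),\nabla w_k)_{L^2(\Omega)}=(-\Delta u_0,w_k)_{L^2(\Omega)}$ for each $k\leq m$, one obtains
\[
\partial_t^2 u_m(0) \;=\; P_m\bigl(f(0)+\Delta u_0-V P_m u_0\bigr),
\]
where $P_m$ denotes the $L^2$-projection onto $\spn\{w_1,\ldots,w_m\}$. Since $u_0\in H^2_0(\Omega)$ controls $\|\Delta u_0\|_{L^2(\Omega)}$, $V\in L^\infty(\Omega)$, and the embedding $H^1([0,T];L^2(\Omega))\hookrightarrow C([0,T];L^2(\Omega))$ controls $\|f(0)\|_{L^2(\Omega)}$ by $\|f\|_{H^1([0,T];L^2(\Omega))}$, I get
\[
\|\partial_t^2 u_m(0)\|_{L^2(\Omega)}\lesssim \|f\|_{H^1([0,T];L^2(\Omega))}+(1+\|V\|_{L^\infty(\Omega)})\|u_0\|_{H^2_0(\Omega)}.
\]

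Feeding this bound together with $\|\tilde u_m(0)\|_{H^1_0(\Omega)}\leq \|u_1\|_{H^1_0(\Omega)}$ into estimate \eqref{energy est} applied to $\tilde u_m$ yields
\[
\sup_{0\leq t\leq T}\|\partial_t^2 u_m\|_{L^2(\Omega)} \lesssim \|f\|_{H^1([0,T];L^2(\Omega))} +(1+\|V\|^{1/2}_{L^\infty(\Omega)})\|u_1\|_{H^1_0(\Omega)} +(1+\|V\|_{L^\infty(\Omega)})\|u_0\|_{H^2_0(\Omega)},
\]
and multiplying by $\sqrt{T}$ followed by passage to the weak Galerkin limit (by lower semicontinuity of norms under weak convergence, as in Theorem \ref{thm3.3}) gives \eqref{3.23}. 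Estimate \eqref{3.24+} then follows at once by rewriting \eqref{nonh-p} as $-\Delta u = f-\partial_t^2u-Vu$, taking the $L^2([0,T];L^2(\Omega))$-norm, and using $\|Vu\|_{L^2([0,T];L^2(\Omega))}\leq \sqrt{T}\,\|V\|^{1/2}_{L^\infty(\Omega)}\|\sqrt V u\|_{L^\infty([0,T];L^2(\Omega))}$ together with \eqref{sqrtV-v} from Corollary \ref{cor3.1}. The main obstacle is mostly bookkeeping: time-differentiation at the Galerkin level is painless because \eqref{d''} is a smooth linear ODE once the data are sufficiently regular, but the correct handling of $\partial_t^2 u_m(0)$ is where the hypotheses $u_0\in H^2_0(\Omega)$ and $f\in H^1([0,T];L^2(\Omega))$ and the eigenfunction structure of $\{w_k\}$ are simultaneously exploited; obtaining an $L^2$-norm rather than an $H^{-1}$-norm of $\partial_t^2 u_m(0)$ is precisely what distinguishes Theorem \ref{thm3.5} from Corollary \ref{cor3.1}.
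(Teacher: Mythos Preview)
Your proposal is correct and follows essentially the same strategy as the paper: differentiate the equation once in time, apply the first-order energy estimate to the resulting problem for $\partial_t u$ (with data $u_1$ and $\Delta u_0-Vu_0+f(0)$ and source $\partial_t f$), then recover $\Delta u$ algebraically from the equation using \eqref{sqrtV-v}. The only difference is that you carry this out at the Galerkin level---differentiating the ODE \eqref{d''} and bounding $\partial_t^2 u_m(0)$ explicitly via the eigenfunction identity before passing to the limit---whereas the paper differentiates the PDE directly and invokes Corollary~\ref{cor3.1} on the auxiliary problem for $v=u_t$; your version is arguably the more careful of the two, since it sidesteps the implicit regularity needed to justify time-differentiating the weak formulation.
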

\begin{proof}
Taking the derivative with respect to $t$ over $[0,T]$ in equation \eqref{nonh-p}, we get
\begin{equation}\label{eq_u_t}
\partial^2_t (u_t)(t,x)-\Delta(u_t)(t,x)+V(x)(u_t)(t,x)=f_t(t,x).    
\end{equation}
For the initial data, we have 
\[
u_t(0,x)=u_1(x),\quad
\partial_t(u_t)(0,x)=\partial^2_t u(0,x).
\]
Using the equation \eqref{nonh-p}, we obtain
\[
\partial^2_tu(0,x)=\Delta u_0(x)-V(x)+f(0,x).
\]
We denote $u_t(t,x)=v(t,x)$, and rewrite the equation \eqref{eq_u_t} in the following form:
\begin{equation}\label{eq_v}
    \left\{\begin{array}{l}
         \partial^2_tv(t,x)-\Delta v(t,x)+V(x)v(t,x)=f_t(t,x), \quad (t,x)\in [0,T]\times \Omega,\\
    v(0,x)=u_1(x), \quad x\in \Omega,\\
    \partial_t v(0,x)=\Delta u_0(x)-V(x)u_0(x)+f(0,x), \quad x\in \Omega,  \\
        v(t,x)=0,\quad (t,x)\in [0,T]\times \partial\Omega.
    \end{array}\right.    
\end{equation}
Then, assuming $u_0\in H^2_0(\Omega)$, $f\in H^1([0,T];L^2(\Omega))$ and using the estimate \eqref{partial_tu_m}, we obtain
\begin{align*}
\|\partial_tv\|_{L^{\infty}([0,T],L^2(\Omega))}&\lesssim \| f\|_{H^1([0,T];L^2(\Omega))}+\|V\|^{\frac{1}{2}}_{L^\infty(\Omega)} \|u_1\|_{L^2(\Omega)}\nonumber\\
+&\|u_1\|_{H^1_0(\Omega)}+\|u_0\|_{H^2_0(\Omega)}+\|V\|_{L^\infty(\Omega)}\|u_0\|_{L^2(\Omega)}+\|f(0,\cdot)\|_{L^2(\Omega)},
\end{align*}
since the norm $\| f\|_{H^1([0,T];L^2(\Omega))}$ dominates $\|f(0,\cdot)\|_{L^2(\Omega)}$ by the Sobolev embedding it, we obtain
\begin{align*}
\|\partial_tv\|_{L^{\infty}([0,T],L^2(\Omega))}&\lesssim \| f\|_{H^1([0,T];L^2(\Omega))}+\|V\|^{\frac{1}{2}}_{L^\infty(\Omega)} \|u_1\|_{L^2(\Omega)}\nonumber\\
+&\|u_1\|_{H^1_0(\Omega)}+\|u_0\|_{H^2_0(\Omega)}+\|V\|_{L^\infty(\Omega)}\|u_0\|_{L^2(\Omega)}.
\end{align*}
Since $v=u_t$ and $L^\infty([0,T];L^2(\Omega))\subset L^2([0,T];L^2(\Omega))$, we get 
\begin{align*}
\|\partial^2_tu\|_{L^2([0,T];L^2(\Omega))}&\lesssim \|\partial^2_tu\|_{L^{\infty}([0,T],L^2(\Omega))}=\|\partial_tv\|_{L^{\infty}([0,T],L^2(\Omega))},
\end{align*}
which implies \eqref{3.23}.

Similarly as in Corollary \ref{cor3.1}, by using equation \eqref{nonh-p} and taking $L^2$-norm and then integration over the time $[0,T]$, we obtain
\[
\|\Delta u\|_{L^2([0,T];L^2(\Omega))}\lesssim \|\partial^2_t u\|_{L^2([0,T];L^2(\Omega))} +\|Vu\|_{L^2([0,T];L^2(\Omega))} +\|f\|_{L^2([0,T];L^2(\Omega))}.
\]
By using the estimates \eqref{sqrtV-v} and \eqref{3.23}, we get \eqref{3.24+}. This proves Theorem \ref{thm3.5}.
\end{proof}

\begin{defn}[Weak Laplacian \cite{DRP-22}]\label{weak Laplace}
    The Laplacian operator $-\Delta$ on $\Omega$ is considered with homogeneous Dirichlet boundary conditions in the weak sense:
    \begin{align*}
    -\Delta : \mathcal{D} (-\Delta) \subset H^1_0(\Omega) \to L^2(\Omega)\\
    u\mapsto -\Delta u.
    \end{align*}
Here $\mathcal{D}(-\Delta)$ is the domain of $-\Delta$, defined in the following way: $u\in \mathcal{D}(-\Delta)$ if and only if $u\in H^1_0(\Omega)$ and $-\Delta u\in L^2(\Omega)$ in the sense that there exists $f\in L^2(\Omega)$ such that 
\[
\forall v \in H^1_0(\Omega) \,\, (\nabla u,\nabla v)_{L^2(\Omega)}=(f,v)_{L^2(\Omega)}.
\]
The operator $-\Delta$ is linear self-adjoint and coercive in the sense that for $u\in \mathcal{D}(-\Delta)$
\[
(-\Delta u,u)_{L^2(\Omega)}=(\nabla u,\nabla u)_{L^2(\Omega)}
\]
and we use the notation 
\[
\|u\|_{\mathcal{D}(-\Delta)}=\|\Delta u \|_{L^2(\Omega)}.
\]

\end{defn}
We need to include the following space as well:
\begin{defn}\label{def_9}
    For $U$ an open set we define the Hilbert space 
    \[
    M(U)=L^2([0,T];\mathcal{D}(-\Delta))\cap H^2([0,T];L^2(U))
    \]
    with the norm
    \[
    \|u\|^2_{M(U)}=\|\Delta u\|^2_{L^2([0,T];L^2(U))}+\|\partial^2_tu\|^2_{L^2([0,T];L^2(U))}+\|\sqrt{V}u\|^2_{L^2([0,T];L^2(U))}
    \]
    associated to the scalar product
    \[
    (u,v)_{M(U)}=\int\limits_0^T\left((\Delta u,\Delta v)_{L^2(U)}+(\partial^2_t u,\partial^2_t v)_{L^2(U)}+(\sqrt{V} u,\sqrt{V} v)_{L^2(U)}\right)ds.
    \]
\end{defn}

\begin{thm}\label{thm3.5+}
    For an arbitrary bounded domain $\Omega\subset \mathbb{R}^n$ and
    \begin{equation}\label{3.23+}
        M_0(\Omega):=\{u\in M(\Omega)|u(0)=0,\,\partial_tu(0)=0\}
    \end{equation}
    there exists a unique weak solution to the  homogeneous Dirichlet problem for the wave equation with potential and homogeneous initial conditions, in terms of the variational formulation \eqref{weaksol}, and $u\in M_0(\Omega)$ if and only if $f\in H^1([0,T];L^2(\Omega))$.

    Moreover, the solution satisfies the estimate
    \begin{equation}\label{3.24'}
        \|u\|_{M(\Omega)}\lesssim \|f\|_{H^1([0,T];L^2(\Omega))}.
    \end{equation}
\end{thm}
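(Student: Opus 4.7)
The plan is to derive this theorem by specializing Theorem \ref{thm3.5} and Corollary \ref{cor3.1} to homogeneous initial data $u_0=u_1=0$. Uniqueness of the weak solution for any source $f\in L^2([0,T];L^2(\Omega))$ already follows from Theorem \ref{thm3.3} (since $H^1([0,T];L^2(\Omega))\subset L^2([0,T];L^2(\Omega))$), so the substantive work concerns the regularity statement $u\in M_0(\Omega)$, the estimate \eqref{3.24'}, and the converse characterization.

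For the forward implication $f\in H^1([0,T];L^2(\Omega))\Rightarrow u\in M_0(\Omega)$, I would invoke Theorem \ref{thm3.5} with $u_0=0\in H^2_0(\Omega)$ and $u_1=0\in H^1_0(\Omega)$. This immediately yields $\partial_t^2u\in L^2([0,T];L^2(\Omega))$ and $\Delta u\in L^2([0,T];L^2(\Omega))$, hence $u\in H^2([0,T];L^2(\Omega))\cap L^2([0,T];\mathcal{D}(-\Delta))=M(\Omega)$. Because the initial data vanish, we have $u(0)=0$ and $\partial_tu(0)=0$, so $u\in M_0(\Omega)$. The estimate \eqref{3.24'} is then obtained by summing the three component bounds making up the $M(\Omega)$-norm: estimate \eqref{3.23} controls $\|\partial_t^2u\|_{L^2([0,T];L^2(\Omega))}$, estimate \eqref{3.24+} controls $\|\Delta u\|_{L^2([0,T];L^2(\Omega))}$, and \eqref{sqrtV-v} combined with the trivial inclusion $L^\infty([0,T])\hookrightarrow L^2([0,T])$ (yielding a factor $\sqrt{T}$) controls $\|\sqrt{V}u\|_{L^2([0,T];L^2(\Omega))}$. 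With $u_0=u_1=0$ every right-hand side reduces to a constant multiple of $\|f\|_{H^1([0,T];L^2(\Omega))}$, yielding \eqref{3.24'}.

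For the converse implication $u\in M_0(\Omega)\Rightarrow f\in H^1([0,T];L^2(\Omega))$, the source must be reconstructed from the equation as $f=\partial_t^2u-\Delta u+Vu$; membership in $L^2([0,T];L^2(\Omega))$ is immediate from the definition of $M_0(\Omega)$ and the boundedness of $V$. Promoting this to $H^1$-in-time is, in my view, the principal technical step. The natural route is to differentiate the equation in $t$ and apply the energy identity used in the proof of Theorem \ref{thm3.5} to $v=\partial_tu$, which requires the compatibility condition $\partial_tu(0)=0$ supplied by membership in $M_0(\Omega)$, together with $u(0)=0$ so that the auxiliary datum $\Delta u(0)-Vu(0)+f(0)$ simplifies. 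I would justify the extra differentiation through time-difference quotients $(u(t+h)-u(t))/h$, obtaining uniform bounds in $L^2([0,T-h];L^2(\Omega))$ on $\partial_t f$ via the forward estimate applied to the shifted data, and then pass $h\to 0$. The main obstacle I anticipate is precisely this step, namely extracting $H^1$-time control of $f$ from a priori only square-integrable time derivatives of $u$ in $M(\Omega)$; the argument hinges on whether the compatibility afforded by the homogeneous initial conditions is strong enough to convert the difference-quotient bounds into the desired reverse estimate $\|f\|_{H^1([0,T];L^2(\Omega))}\lesssim \|u\|_{M(\Omega)}$, thereby establishing the isomorphism between $H^1([0,T];L^2(\Omega))$ and $M_0(\Omega)$.
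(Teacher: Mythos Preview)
Your forward implication and the derivation of \eqref{3.24'} coincide with the paper's route: the paper also invokes Theorem \ref{energy estimates}, Corollary \ref{cor3.1} and Theorem \ref{thm3.5} with $u_0=u_1=0$ to conclude $\Delta u,\ \partial_t^2u,\ \sqrt{V}u\in L^2([0,T];L^2(\Omega))$ and hence $u\in M_0(\Omega)$; you are simply more explicit about which estimate bounds which piece of the $M$-norm.

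The converse is where there is a genuine gap, both in your proposal and in the paper itself. The paper's entire argument for the reverse direction is the single sentence ``given the regularity of $u$, it follows that $f\in H^1([0,T];L^2(\Omega))$''. Your difference-quotient strategy cannot close this either: the forward estimate applied to time-shifted data controls the shifted \emph{solution} in terms of the shifted \emph{source}, not the other way round, so no reverse inequality $\|f\|_{H^1}\lesssim\|u\|_{M}$ can emerge from it. In fact the converse, as stated, is false. For any $0\neq\varphi\in\mathcal{D}(-\Delta)$ set $u(t,x)=t^{5/2}\varphi(x)$. Then $u(0)=\partial_tu(0)=0$, $\partial_t^2u=\tfrac{15}{4}t^{1/2}\varphi\in L^2([0,T];L^2(\Omega))$ and $\Delta u=t^{5/2}\Delta\varphi\in L^2([0,T];L^2(\Omega))$, so $u\in M_0(\Omega)$; yet $f=\partial_t^2u-\Delta u+Vu$ satisfies $\partial_tf=\tfrac{15}{8}t^{-1/2}\varphi+(\text{terms bounded in }t)$, and since $t^{-1/2}\notin L^2(0,T)$ we have $f\notin H^1([0,T];L^2(\Omega))$. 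The space $M_0(\Omega)$ simply carries no third-order time information and no time regularity on $\Delta u$, which is exactly what $\partial_tf\in L^2$ would require. Your instinct that this step was the obstacle was correct; the biconditional should be read only in the direction $f\in H^1\Rightarrow u\in M_0$, which is the sole direction the rest of the paper (in particular Theorem \ref{th4.1}) actually uses.
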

\begin{proof}
    We follow the idea of the proof Theorem 13 in \cite{DRP-22}. If the source term $f\in H^1([0,T];L^2(\Omega))$, $V\in L^\infty(\Omega)$, $u_0\in H^2_0(\Omega)$ and $u_1\in H^1_0(\Omega)$, by Theorem \ref{energy estimates}, Corollary \ref{cor3.1} and Theorem \ref{thm3.5} there exists a unique weak solution to the problem \eqref{nonh-p} in terms of \eqref{weaksol} with $\Delta u\in L^2([0,T];L^2(\Omega))$, $\partial^2_tu\in L^\infty([0,T],H^2(\Omega))$, and $\sqrt{V}u\in L^\infty([0,T];L^2(\Omega))$. Hence $u\in M(\Omega)$. 
    
    Let us consider a weak solution $u\in L^2([0,T];\mathcal{D}(-\Delta))\cap H^2([0,T];L^2(\Omega))$ with the initial conditions $u(0)=0$, $\partial_tu(0)=0$ of the homogeneous problem \eqref{nonh-p} for the wave equation. Due to the linearity of the problem, $u$ is unique. Additionally, given the regularity of $u$, it follows that $f\in H^1([0,T];L^2(\Omega))$.
\end{proof}

\section{Non-homogeneous Dirichlet condition}
In this section we consider the wave equation with potential with initial data and non-homogeneous boundary condition
\begin{equation}\label{wave}
\left\{\begin{array}{l}
     \Tilde{u}_{tt}-\Delta \Tilde{u} +V(x)\Tilde{u}=f(t,x), \quad (t,x)\in [0,T]\times\Omega \subset \mathbb{R}^n,  \\
      \Tilde{u}(0,x)=u_0(x),\quad x\in \Omega,\\
      \Tilde{u}_t(0,x)=u_1(x), \quad x\in \Omega,\\
      \Tilde{u}(t,x)=g(t,x),\quad (t,x)\in [0,T]\times\partial \Omega.
\end{array}\right.
\end{equation} 

\begin{thm}\label{th4.1} Let $\Omega$ be an admissible domain in $\mathbb{R}^n$ with a $d$-set boundary $\partial \Omega$ for $n-2<d<n$.
Let also $\beta=2-\frac{n-d}{2}>0$ and
\begin{equation}\label{4.1'}
F:=H^2([0,T];B^{2,2}_\beta(\partial\Omega)).
\end{equation}
    For $u_0\in H^2_0(\Omega)$, $u_1\in H^1_0(\Omega)$, $g\in F$, $V\in L^\infty(\Omega)$, $V\geq 0$ and $f\in H^1([0,T];L^2(\Omega))$ with the consistency conditions
    \[
    g(0)=\Tr_{\partial\Omega}u_0,\quad \partial_tg(0)=\Tr_{\partial \Omega}u_1,
    \]
    there exists a unique weak solution $\Tilde{u}$ of the problem \eqref{wave}. It is a weak solution in such a way that $\Tilde{u}=u^*+G$ with
$$G=G(t,x)\in M_1(\Omega):=H^2([0,T];L^2(\Omega))\cap L^2([0,T];H^2(\Omega))$$ 
such that $\Tr_{\partial \Omega}G(t)=g(t)$ and with $u^*\in M(\Omega)$, which is the unique weak solution of the system
\begin{equation}\label{4.2}
    \left\{\begin{array}{l}
         \partial_t^2u-\Delta u+V u=f-\partial^2_tG+\Delta G-VG,\\
         u(0)=u_0-G(0,x),\, \partial_tu(0)=u_1-\partial_tG(0,x),\quad x\in \Omega, 
    \end{array}\right.
\end{equation}
in the sense of variational formulation \eqref{u_m_tt}. Moreover, we have an apriori estimate
\begin{align}\label{4.3}
 \|u^*\|_{M(\Omega)}&\lesssim\left(1+\|V\|^\frac{1}{2}_{L^\infty(\Omega)}\right)\left(\|f\|_{H^1([0,T];L^2(\Omega))}+\|u_0\|_{H^1_0(\Omega)}+\|u_1\|_{H^1_0(\Omega)}\right)\nonumber\\
&+\left(1+\|V\|_{L^\infty(\Omega)} \right)\|u_0\|_{H^2_0(\Omega)} +\|g\|_{F}.
\end{align}
\end{thm}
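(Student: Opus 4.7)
The plan is to reduce the non-homogeneous boundary value problem \eqref{wave} to the homogeneous Dirichlet problem treated in Section 3 by lifting the boundary data $g$ into the interior. The starting ingredient is Theorem \ref{thm-trace} applied with $k=2$ and $p=2$: under the assumption $n-2<d<n$ we have $\beta=2-\frac{n-d}{2}>0$, so there is a bounded right inverse $E_{\partial\Omega}: B^{2,2}_\beta(\partial\Omega)\to H^2(\Omega)$ of $\Tr_{\partial\Omega}$. Applying $E_{\partial\Omega}$ pointwise in $t$ to $g(t,\cdot)\in B^{2,2}_\beta(\partial\Omega)$ produces the lifting
\[
G(t,x):=(E_{\partial\Omega}g(t,\cdot))(x)\in H^2([0,T];H^2(\Omega))\subset M_1(\Omega),
\]
with $\Tr_{\partial\Omega}G(t)=g(t)$ for every $t\in[0,T]$ and the linear bound $\|G\|_{M_1(\Omega)}\lesssim\|g\|_F$.

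Next, I set $u^*:=\tilde u-G$. A direct substitution in \eqref{wave} shows that $u^*$ must satisfy \eqref{4.2} with source $\tilde f:=f-\partial_t^2 G+\Delta G-VG$ and initial data $u_0^*:=u_0-G(0,\cdot)$, $u_1^*:=u_1-\partial_tG(0,\cdot)$. By the Sobolev embedding $H^2([0,T];H^2(\Omega))\hookrightarrow C^1([0,T];H^2(\Omega))$, both $G(0,\cdot)$ and $\partial_tG(0,\cdot)$ make sense in $H^2(\Omega)$, and the consistency conditions $g(0)=\Tr_{\partial\Omega}u_0$, $\partial_tg(0)=\Tr_{\partial\Omega}u_1$ force $\Tr_{\partial\Omega}u_0^*=\Tr_{\partial\Omega}u_1^*=0$. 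Hence $u_0^*\in H^2(\Omega)\cap H^1_0(\Omega)$ and $u_1^*\in H^1_0(\Omega)$, as needed to invoke the homogeneous theory.

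With these reductions in place, I invoke Theorem \ref{thm3.5+} (together with Theorem \ref{thm3.5} to absorb the nonzero initial data) to obtain a unique weak solution $u^*\in M(\Omega)$ of \eqref{4.2} in the sense of \eqref{weaksol}. The estimate \eqref{4.3} then follows by combining \eqref{3.23}--\eqref{3.24+} applied to $u^*$ with the bounds $\|u_0^*\|_{H^2_0(\Omega)}\lesssim\|u_0\|_{H^2_0(\Omega)}+\|g\|_F$, $\|u_1^*\|_{H^1_0(\Omega)}\lesssim\|u_1\|_{H^1_0(\Omega)}+\|g\|_F$, and the continuity of $E_{\partial\Omega}$ used to dominate $\|\tilde f\|_{H^1([0,T];L^2(\Omega))}$ in terms of $\|f\|_{H^1([0,T];L^2(\Omega))}$, $\|V\|_{L^\infty(\Omega)}$ and $\|g\|_F$. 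Uniqueness of $\tilde u$ is inherited from uniqueness of $u^*$: two candidates differ by a solution of \eqref{wave} with $f\equiv 0$, $u_0\equiv u_1\equiv 0$, $g\equiv 0$, which vanishes identically by Theorem \ref{thm3.3}.

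The main obstacle in this program is the verification that $\tilde f\in H^1([0,T];L^2(\Omega))$. The contributions $\Delta G$ and $VG$ are straightforward, since $\Delta$ commutes with $\partial_t$ and $V\in L^\infty(\Omega)$ is independent of $t$, and both operations preserve the time regularity of $G\in H^2([0,T];H^2(\Omega))$, yielding $H^1([0,T];L^2(\Omega))$-contributions bounded by $\|g\|_F$. The term $\partial_t^2 G=E_{\partial\Omega}(\partial_t^2 g)$ is the delicate one, as it only enjoys $L^2([0,T];H^2(\Omega))$ regularity \emph{a priori}. The key is that the space $F=H^2([0,T];B^{2,2}_\beta(\partial\Omega))$ has been tuned precisely so that this term can be absorbed into the energy identity \eqref{u_m_tt} for $u^*$, either by integrating $(\partial_t^2 G,\phi)_{L^2(\Omega)}$ by parts in time against the Galerkin test functions or by pairing it directly with $\partial_t u^*$ in the energy estimate; closing this step is what ultimately delivers the full $M(\Omega)$-regularity of $u^*$ and the estimate \eqref{4.3}.
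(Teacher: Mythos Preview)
Your strategy---lifting $g$ via the bounded right inverse $E_{\partial\Omega}$ to a function $G$ with $\Tr_{\partial\Omega}G=g$, writing $u^*=\tilde u-G$, and reducing to the homogeneous Dirichlet problem of Section~3---is exactly the one the paper uses. The paper is in fact briefer and less careful than you are: it only asserts that $\Delta G-\partial_t^2 G-VG\in L^2([0,T];L^2(\Omega))$ (it does \emph{not} attempt to place the modified source in $H^1$ in time), checks that the consistency conditions force the modified initial data to have zero trace, and then invokes Corollary~\ref{cor3.1} rather than Theorem~\ref{thm3.5} or~\ref{thm3.5+}, concluding the $M(\Omega)$-regularity and estimate~\eqref{4.3} in a single sentence appealing to the regularity of $u_0$, $u_1$ and $G$.

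The obstacle you flag---that $\partial_t^2 G$ lies only in $L^2([0,T];H^2(\Omega))$ and not in $H^1([0,T];L^2(\Omega))$, so that Theorem~\ref{thm3.5} cannot be applied to the full source $\tilde f$---is genuine, and the paper simply sidesteps it by not routing the argument through Theorem~\ref{thm3.5}. Your observation that in fact $G\in H^2([0,T];H^2(\Omega))$ (sharper than the paper's $G\in M_1(\Omega)$) and your proposal to treat $(\partial_t^2 G,\partial_t u^*)$ by integration by parts in time within the energy identity are sensible refinements that go beyond what the paper supplies. In short, your argument is a more detailed execution of the paper's own proof, and the residual gap you isolate is present in the paper's treatment as well.
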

\begin{proof}
Since $g\in F$, the existence of $G\in M_1(\Omega)$ with $\Tr_{\partial\Omega}G=g$ comes from properties of the trace operator, which according to Theorem \ref{thm-trace} has a bounded linear right inverse, i.e. the extension operator $E_{\partial\Omega}$ such as
\begin{align*}
E_{\partial \Omega}: B^{2,2}_\beta(\partial\Omega)\to H^2(\Omega).
\end{align*}
In addition, the boundedness of $E_{\partial\Omega}$ implies
\begin{align*}
    \|E_{\partial\Omega}g\|_{M_1(\Omega)}&=\|\partial^2_t E_{\partial\Omega}g\|_{L^2([0,T];L^2(\Omega))}+ \|E_{\partial\Omega}g\|_{L^2([0,T];H^2(\Omega))}\\
    &\leq \|\partial^2_tg\|_{L^2([0,T];B^{2,2}_{\beta}(\partial \Omega))}+\|g\|_{L^2([0,T];B^{2,2}_\beta(\partial\Omega))}\leq \|g\|_{H^2([0,T];B^{2,2}_\beta(\partial\Omega))}
\end{align*}
since
\[
\|\partial^2_tE_{\partial\Omega}g\|_{L^2([0,T];L^2(\Omega))}\leq C\|\partial^2_tE_{\partial\Omega}g\|_{L^2([0,T];H^2(\Omega))}.
\]
Therefore
\[
\|G\|_{M_1(\Omega)}=\|E_{\partial \Omega}g\|_{M_1(\Omega)}\leq C\|g\|_{F}.
\]

Let $u^*$ be the solution of problem \eqref{4.2}. The regularity of $G$ ensures that
\[
\Delta G-\partial^2_tG-VG\in L^2([0,T];L^2(\Omega)),\quad u_0-G(0,x)\in H^2(\Omega),\quad u_1-\partial_tG(0,x)\in H^1_0(\Omega),
\] 
because the consistency conditions allow us to have
\[
\Tr_{\partial\Omega}(u_0-G(0,x))=\Tr_{\partial\Omega}(u_0)-g(0)=0,
\]
\[
\Tr_{\partial\Omega}(u_1-\partial_t G(0,x))=\Tr_{\partial\Omega}(u_1)-\partial_t g(0)=0.
\]
Then we apply Corollary \ref{cor3.1} to obtain the existence of a unique solution $u^*$ of problem \eqref{4.2} with the desired regularity. The regularity of $u_0$, $u_1$ and $G$ allows to deduce estimate \eqref{4.3}. 
\end{proof}

\section{Very weak solutions}
In this section we consider the wave equation when potential, initial data and boundary condition are less regular in the sense of so called ``$\delta$-function" distributions. For this, we will be using the concept of very weak solutions. 

Assume that the potential $V$, initial data $(u_0,u_1)$ are distributions on $\Omega$, and the boundary condition $g$ is distribution on $\partial \Omega$.

\begin{defn}\label{def5.1}
    (i) A net of functions $(u_\varepsilon=u_\varepsilon(t,x))$ is said to be $L^2$-moderate if there exist $N\in \mathbb{N}$ and $C>0$ such that 
    \[
    \|u_\varepsilon\|_{L^2([0,T];L^2(\Omega))}\leq C\varepsilon^{-N}.
    \]
    (ii) A net of functions $(V_\varepsilon=V_\varepsilon(x))$ is said to be $L^\infty$-moderate if there exist $N\in \mathbb{N}$ and $C>0$ such that 
    \[
    \|V_\varepsilon\|_{L^\infty(\Omega)}\leq C\varepsilon^{-N}.
    \]
\end{defn}

\begin{defn}\label{def14}
(i) A net of functions $(f_\varepsilon=f_\varepsilon(t,x))$ is said to be $H^1(L^2)$-moderate if there exist $N\in \mathbb{N}$ and $C>0$ such that
\[
\|f_\varepsilon\|_{H^1([0,T];L^2(\Omega))}\leq C e^{-N}.
\]
(ii) A net of functions $(u_{0,\varepsilon}=u_{0,\varepsilon}(x))$ is said to be $H^2_0$-moderate if there exist $N\in \mathbb{N}$ and $C>0$ such that
\[
\|u_{0,\varepsilon}\|_{H^2_0(\Omega)}\leq C e^{-N}.
\]
(iii) A net of functions $(u_{1,\varepsilon}=u_{1,\varepsilon}(x))$ is said to be $H^1_0$-moderate if there exist $N\in \mathbb{N}$ and $C>0$ such that
\[
\|u_{1,\varepsilon}\|_{H^1_0(\Omega)}\leq C e^{-N}.
\]
\end{defn}

\begin{rem}\label{rem5.1}
 We note that such assumptions are natural for distributional coefficients in the sense that regularizations of distributions are moderate. Precisely, by the structure theorems for distributions (see, e.g. \cite{Friedlander}), we know that distributions 
\begin{equation}\label{moder}
  \mathcal{D}'(\Omega) \subset \{L^\infty(\Omega) -\text{moderate families} \},  
\end{equation}
and we see from \eqref{moder}, that a solution to an initial/boundary problem may not exist in the sense of distributions, while it may exist in the set of $L^\infty$-moderate functions. 
\end{rem}

As an example, let us take $f\in L^2(\Omega)$, $f:\Omega\to \mathbb{C}$. We introduce the function
    \[
    \Tilde{f}=\left\{\begin{array}{l}
    f,\,\text{on }\Omega,\\
    0,\, \text{on } \mathbb{R}^n \setminus \Omega,
    \end{array}\right.
    \]
    then $\Tilde{f}:\mathbb{R}^n\to \mathbb{C}$, and $\Tilde{f}\in \mathcal{E}'(\mathbb{R}^n)$.

    Let $\Tilde{f}_\varepsilon=\Tilde{f}*\psi_\varepsilon$ be obtained as the convolution of $\Tilde{f}$ with a Friedrich mollifier $\psi_\varepsilon$, where 
$$\psi_\varepsilon(x)=\frac{1}{\varepsilon}\psi\left(\frac{x}{\varepsilon}\right),\quad \text{for}\,\, \psi\in C^\infty_0(\mathbb{R}^n),\, \int \psi=1. $$
Then the regularising net $(\Tilde{f}_\varepsilon)$ is $L^p$-moderate for any $p \in \mathbb{R}^n\setminus \Omega$, and it approximates $f$ on $\Omega$:
$$0\leftarrow \|\Tilde{f}_\varepsilon-\Tilde{f}\|^p_{L^p(\mathbb{R}^n)}\approx \|\Tilde{f}_\varepsilon-f\|^p_{L^p(\Omega)}+\|\Tilde{f}_\varepsilon\|^p_{L^p(\mathbb{R}^n\setminus \Omega)}.$$

Now, introduce the notion of a very weak solution to the wave equation for Laplacian with potential with initial data and non-homogeneous Dirichlet boundary condition
\begin{equation}\label{5.2}
    \left\{\begin{array}{l}
    \partial^2_tu(t,x)-\Delta u(t,x)+V(x)u(t,x)=f(t,x),\quad (t,x)\in[0,T]\times \Omega,\\
    u(0,x)=u_0(x),\quad x\in \Omega,\\
    u_t(0,x)=u_1(x),\quad x\in \Omega,\\
    u(t,x)=g(t,x),\quad (t,x)\in [0,T]\times \partial \Omega.
    \end{array}\right.
\end{equation}

\begin{defn}\label{defn5.2}
    Let $V\in \mathcal{D}'(\Omega)$.  The net $(u_\varepsilon)_{\varepsilon>0}$ is said to be a very weak solution to the initial/boundary problem  \eqref{5.2} if there exists an $L^\infty$-moderate regularization $V_\varepsilon$ of $V$, $H^1(L^2)$-moderate regularization $f_\varepsilon$ of $f$,  $H^2_0$-moderate regularization $u_{0,\varepsilon}$ of $u_0$ and $H^1_0$-moderate regularization $u_{1,\varepsilon}$ of $u_1$, such that
\begin{equation}\label{5.3}
\left\{\begin{array}{l}\partial^2_t u_\varepsilon(t,x)-\Delta u_\varepsilon(t,x)+V_\varepsilon(x) u_\varepsilon(t,x)=f_\varepsilon(t,x),\quad (t,x)\in [0,T]\times\Omega,\\
 u_\varepsilon(0,x)=u_{0,\varepsilon}(x),\,\,\, x\in \Omega, \\
\partial_t u_\varepsilon(0,x)=u_{1,\varepsilon}(x), \,\,\, x\in \Omega,\\
u_\varepsilon(t,x)=g(t,x), \quad (t,\Omega)\in[0,T]\times\partial\Omega,
\end{array}\right.\end{equation}
and $(u_\varepsilon)$ is $M$-moderate.
\end{defn}

According to the notions above, we have the following properties of very weak solutions.
\begin{thm}[Existence]\label{thm5.1}
    Let the coefficient $V\geq 0$ and initial data $(u_0,\, u_1)$ be distributions in $\Omega$, and the boundary condition $g\in F$. Then the initial/boundary problem  \eqref{5.2} has a very weak solution.
\end{thm}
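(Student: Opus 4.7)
The plan is to solve the regularised initial/boundary problem \eqref{5.3} for every $\varepsilon>0$ by means of Theorem \ref{th4.1}, and then to read off $M$-moderateness of the resulting net $(u_\varepsilon)$ from the a priori estimate \eqref{4.3}. Following Remark \ref{rem5.1}, I pick a non-negative Friedrichs mollifier $\psi_\varepsilon$ and convolve the (zero-extended) distributions $V$, $u_0$, $u_1$, $f$ with $\psi_\varepsilon$, producing smooth nets $V_\varepsilon\geq 0$, $\tilde u_{0,\varepsilon}$, $\tilde u_{1,\varepsilon}$, $f_\varepsilon$ that are $L^\infty$-, $H^2_0$-, $H^1_0$- and $H^1(L^2)$-moderate respectively (this is exactly what the structure theorems encoded in \eqref{moder} provide). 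The boundary datum $g\in F$ is already regular enough and requires no regularisation.

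To meet the consistency conditions of Theorem \ref{th4.1} at $t=0$, I replace the $\tilde u_{j,\varepsilon}$ by
\[
u_{0,\varepsilon}:=\tilde u_{0,\varepsilon}+E_{\partial\Omega}\bigl(g(0)-\Tr_{\partial\Omega}\tilde u_{0,\varepsilon}\bigr),\qquad u_{1,\varepsilon}:=\tilde u_{1,\varepsilon}+E_{\partial\Omega}\bigl(\partial_t g(0)-\Tr_{\partial\Omega}\tilde u_{1,\varepsilon}\bigr),
\]
where $E_{\partial\Omega}$ is the bounded right inverse of $\Tr_{\partial\Omega}$ furnished by Theorem \ref{thm-trace}. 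This enforces $\Tr_{\partial\Omega}u_{0,\varepsilon}=g(0)$ and $\Tr_{\partial\Omega}u_{1,\varepsilon}=\partial_t g(0)$, and the continuity of $\Tr_{\partial\Omega}$ and $E_{\partial\Omega}$, together with $g(0),\partial_t g(0)\in B^{2,2}_\beta(\partial\Omega)$, transfers the moderateness of $\tilde u_{j,\varepsilon}$ to $u_{j,\varepsilon}$ in the same Sobolev scales.

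For each $\varepsilon>0$ the tuple $(V_\varepsilon,f_\varepsilon,u_{0,\varepsilon},u_{1,\varepsilon},g)$ now satisfies all hypotheses of Theorem \ref{th4.1}, which yields a unique weak solution $u_\varepsilon=u^*_\varepsilon+G$ of \eqref{5.3} with $G=E_{\partial\Omega}g\in M_1(\Omega)$ independent of $\varepsilon$ and $u^*_\varepsilon\in M(\Omega)$. Feeding the moderate bounds of $V_\varepsilon,f_\varepsilon,u_{0,\varepsilon},u_{1,\varepsilon}$ into \eqref{4.3} yields a polynomial-in-$\varepsilon^{-1}$ bound on $\|u^*_\varepsilon\|_{M(\Omega)}$; the $\|\Delta\cdot\|$- and $\|\partial_t^2\cdot\|$-contributions of $G$ to $\|u_\varepsilon\|_{M(\Omega)}$ are $\varepsilon$-independent (since $G\in M_1(\Omega)$), while its $\sqrt{V_\varepsilon}$-weighted contribution is bounded by $\|\sqrt{V_\varepsilon}\|_{L^\infty(\Omega)}\|G\|_{L^2([0,T];L^2(\Omega))}\lesssim\varepsilon^{-N/2}\|g\|_F$. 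Consequently $(u_\varepsilon)=(u^*_\varepsilon+G)$ is $M$-moderate, which is exactly what Definition \ref{defn5.2} requires.

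The delicate point is the trace-matching step: one must verify that the boundary correction $E_{\partial\Omega}(g(0)-\Tr_{\partial\Omega}\tilde u_{0,\varepsilon})$ does not destroy the moderate bound on $u_{0,\varepsilon}$, and similarly for $u_{1,\varepsilon}$. This relies on the admissibility of $\Omega$ through the boundedness of the composition $E_{\partial\Omega}\circ\Tr_{\partial\Omega}$ on the relevant Sobolev space granted by Theorem \ref{thm-trace}; the main bookkeeping is to check that the implicit constants absorbed in the moderate bound do not depend on $\varepsilon$.
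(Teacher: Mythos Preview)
Your proof follows the same strategy as the paper: regularise the data to moderate nets, apply Theorem \ref{th4.1} for each fixed $\varepsilon$, and read off $M$-moderateness from the a priori estimate \eqref{4.3}. You are in fact more careful than the paper on two points it glosses over---the compatibility conditions $g(0)=\Tr_{\partial\Omega}u_{0,\varepsilon}$ and $\partial_t g(0)=\Tr_{\partial\Omega}u_{1,\varepsilon}$ needed to invoke Theorem \ref{th4.1}, and the passage from the bound \eqref{4.3} on $u^*_\varepsilon$ to a bound on the full solution $u_\varepsilon=u^*_\varepsilon+G$.
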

\begin{proof}
Since the formulation of \eqref{5.2} in this case might be impossible in the distributional sense due to issues related to the product of distributions, we replace \eqref{5.2} with a regularised equation. That is, we regularize $V$ by some set $V_{\varepsilon}$ of functions from $L^\infty(\Omega)$, $u_0$ by some set of functions $u_{0,\varepsilon}$ from $H^2_0(\Omega)$, $u_1$  by some set of the functions $u_{1,\varepsilon}$ from $ H^1_0(\Omega)$ and $f$ by some set $f_\varepsilon$ of the functions from $H^1([0,T];L^2(\Omega))$.

Hence, $V_\varepsilon$, $u_{0,\varepsilon}$ and $u_{1,\varepsilon}$ are $L^\infty$, $H^2_0$ and $H^1_0$-moderate regularizations of the coefficient $V$ and the Cauchy data $(u_0,u_1)$ respectively and $f_\varepsilon$ is $H^1(L^2)$-moderate regularization of the function $f$. Therefore, by Definition \ref{def5.1} there exist $N\in \mathbb{N}_0$, $C_1>0$, $C_2>0$, $C_3>0$ and $C_4>0$ such that
$$\|V_\varepsilon\|_{L^\infty(\Omega)}\leq C_1\varepsilon^{-N},\quad \|u_{0,\varepsilon}\|_{H^2_0(\Omega)}\leq C_2\varepsilon^{-N}, \quad \|u_{1,\varepsilon}\|_{H^1_0(\Omega)}\leq C_3\varepsilon^{-N},$$
$$\|f_\varepsilon\|_{H^1([0,T];L^2(\Omega))}\leq C_4\varepsilon^{-N}.$$

We fix $\varepsilon\in (0,1]$, and consider the regularised problem \eqref{5.3}. Then all discussions and calculations of Theorem \ref{th4.1} are valid. Thus, by Theorem \ref{th4.1}, the equation \eqref{5.2} has unique weak solution $u_\varepsilon(t,x)$ in the space $M(\Omega)$ and there exist $N\in \mathbb{N}_0$ and $C>0$, such that
\begin{align*} \|u_\varepsilon\|_{M(\Omega)}&\lesssim\left(1+\|V_{\varepsilon}\|^\frac{1}{2}_{L^\infty(\Omega)}\right)\left(\|f_{\varepsilon}\|_{H^1([0,T];L^2(\Omega))}+\|u_{0,\varepsilon}\|_{H^1_0(\Omega)}+\|u_{1,\varepsilon}\|_{H^1_0(\Omega)}\right)\nonumber\\
&+\left(1+\|V_{\varepsilon}\|_{L^\infty(\Omega)} \right)\|u_{0,\varepsilon}\|_{H^2(\Omega)} +\|g\|_{F}\leq C \varepsilon^{-N},
\end{align*}
where the constant in this inequality is independent of $u_{0,\varepsilon}$, $u_{1,\varepsilon}$, $V_\varepsilon$, $f_\varepsilon$ and $g$.
Hence, $(u_\varepsilon)$ is moderate, and the proof of Theorem \ref{thm5.1} is complete.
\end{proof}

\begin{defn}[Negligibility]\label{def5.3}
Let $(u_\varepsilon)$, $(\Tilde{u}_\varepsilon)$ be two nets in $L^2(\Omega)$. Then, the net $(u_\varepsilon-\Tilde{u}_\varepsilon)$ is called $L^2$-negligible, if for every $N\in \mathbb{N}$ there exist $C>0$ such that the following condition is met
$$\|u_\varepsilon(t,\cdot)-\Tilde{u}_\varepsilon(t,\cdot)\|_{L^2(\Omega)}\leq C \varepsilon^N,$$
for all $\varepsilon\in (0,1]$ uniformly in $t\in [0,T]$. The constant $C$ can depend on $N$ but not on $\varepsilon$.
\end{defn}

Let us consider the ``$\varepsilon$-parameterised problems":
\begin{equation}\label{5.4}
    \left\{\begin{array}{l}\partial^2_t u_\varepsilon(t,x)-\Delta u_\varepsilon(t,x)+V_\varepsilon(x) u_\varepsilon(t,x)=f_\varepsilon(t,x),\quad (t,x)\in [0,T]\times\Omega,\\
 u_\varepsilon(0,x)=u_{0,\varepsilon}(x),\,\,\, x\in \Omega, \\
\partial_t u_\varepsilon(0,x)=u_{1,\varepsilon}(x), \,\,\, x\in \Omega,\\
u_\varepsilon(t,x)=g(t,x),\quad (t,x)\in[0,T]\times\partial\Omega,
\end{array}\right.
\end{equation}
and
\begin{equation}\label{5.5}
    \left\{\begin{array}{l}\partial^2_t \Tilde{u}_\varepsilon(t,x)-\Delta \Tilde{u}_\varepsilon(t,x)+\Tilde{V}_\varepsilon(x) \Tilde{u}_\varepsilon(t,x)=\Tilde{f}_\varepsilon(t,x),\quad (t,x)\in [0,T]\times\Omega,\\
 \Tilde{u}_\varepsilon(0,x)=\Tilde{u}_{0,\varepsilon}(x),\,\,\, x\in \Omega, \\
\partial_t \Tilde{u}_\varepsilon(0,x)=\Tilde{u}_{1,\varepsilon}(x), \,\,\, x\in \Omega,\\
\Tilde{u}_\varepsilon(t,x)=g(t,x), \quad (t,x)\in[0,T]\times\partial\Omega.
\end{array}\right.
\end{equation}

\begin{defn}[Uniqueness of the very weak solution]\label{def5.4}
Let $V\in \mathcal{D}'(\Omega)$. We say that initial/boundary problem \eqref{5.2} has an unique very weak solution, if
for all $L^\infty$-moderate nets $V_\varepsilon$, $\Tilde{V}_\varepsilon$, such that $(V_\varepsilon-\Tilde{V}_\varepsilon)$ is $L^\infty$-negligible; and for all $H^1_0$-moderate regularizations $u_{1,\varepsilon},\,\Tilde{u}_{1,\varepsilon}$ such that $(u_{1,\varepsilon}-\Tilde{u}_{1,\varepsilon})$ is $H^1_0$-negligible, $H^1(L^2)$-moderate regularizations $f_{\varepsilon},\,\Tilde{f}_{\varepsilon}$ such that $(f_{\varepsilon}-\Tilde{f}_{\varepsilon})$ is $H^1(L^2)$-negligible, for all $u_{0,\varepsilon},\,\Tilde{u}_{0,\varepsilon}$ such that $(u_{0,\varepsilon}-\Tilde{u}_{0,\varepsilon})$ is $H^2_0$-negligible, we have that $u_\varepsilon-\Tilde{u}_\varepsilon$ is $M$-negligible.
\end{defn}

\begin{thm}[Uniqueness of the very weak solution]\label{thm5.2}
Assume that the boundary condition $g\in F(\Omega)$ and the coefficient $V\geq 0$, initial data $(u_0,\, u_1)$, source term $f$ be distributions in $\Omega$. Then the very weak solution to the initial/boundary problem  \eqref{5.2} is unique.
\end{thm}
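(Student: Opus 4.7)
The plan is to subtract the two regularised equations \eqref{5.4} and \eqref{5.5} and apply the a priori estimate from Theorem \ref{th4.1} (with vanishing boundary data, since the boundary condition $g$ is common to both systems) to the difference $W_\varepsilon := u_\varepsilon - \tilde u_\varepsilon$. A direct computation shows that $W_\varepsilon$ satisfies
\begin{equation*}
\left\{\begin{array}{l}
\partial_t^2 W_\varepsilon - \Delta W_\varepsilon + V_\varepsilon W_\varepsilon = F_\varepsilon,\quad (t,x)\in[0,T]\times\Omega,\\
W_\varepsilon(0,x) = u_{0,\varepsilon}(x)-\tilde u_{0,\varepsilon}(x),\quad x\in\Omega,\\
\partial_t W_\varepsilon(0,x) = u_{1,\varepsilon}(x)-\tilde u_{1,\varepsilon}(x),\quad x\in\Omega,\\
W_\varepsilon(t,x) = 0,\quad (t,x)\in[0,T]\times\partial\Omega,
\end{array}\right.
\end{equation*}
with the ``corrected'' source
\[
F_\varepsilon := (f_\varepsilon-\tilde f_\varepsilon) + (\tilde V_\varepsilon-V_\varepsilon)\tilde u_\varepsilon,
\]
where the last term absorbs the discrepancy between the two potentials. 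Since $u_{0,\varepsilon},\tilde u_{0,\varepsilon}\in H^2_0(\Omega)$ and $u_{1,\varepsilon},\tilde u_{1,\varepsilon}\in H^1_0(\Omega)$, the consistency $\Tr_{\partial\Omega}W_\varepsilon(0)=\Tr_{\partial\Omega}\partial_t W_\varepsilon(0)=0$ with the zero lateral datum is automatic, so Theorem \ref{th4.1} may be invoked with $G\equiv 0$.

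The first step is to verify that $F_\varepsilon$ is $H^1([0,T];L^2(\Omega))$-negligible. The summand $f_\varepsilon-\tilde f_\varepsilon$ is negligible by hypothesis. For the product term, since $\tilde V_\varepsilon-V_\varepsilon$ depends only on $x$, the product rule and H\"older's inequality in $x$ give
\[
\|(\tilde V_\varepsilon-V_\varepsilon)\tilde u_\varepsilon\|_{H^1([0,T];L^2(\Omega))} \lesssim \|\tilde V_\varepsilon-V_\varepsilon\|_{L^\infty(\Omega)}\bigl(\|\tilde u_\varepsilon\|_{L^2([0,T];L^2(\Omega))}+\|\partial_t\tilde u_\varepsilon\|_{L^2([0,T];L^2(\Omega))}\bigr).
\]
The right-hand factor is bounded by $\|\tilde u_\varepsilon\|_{M(\Omega)}$ via the embedding $M(\Omega)\hookrightarrow H^2([0,T];L^2(\Omega))\hookrightarrow H^1([0,T];L^2(\Omega))$, and hence is $M$-moderate thanks to Theorem \ref{thm5.1}, while the left-hand factor is $L^\infty$-negligible by hypothesis. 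Multiplying a moderate net by a negligible one yields a negligible net, so $F_\varepsilon$ is $H^1(L^2)$-negligible.

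Next I apply estimate \eqref{4.3} (equivalently Theorem \ref{thm3.5+}) to $W_\varepsilon$, which has vanishing boundary data, obtaining
\begin{align*}
\|W_\varepsilon\|_{M(\Omega)} &\lesssim \bigl(1+\|V_\varepsilon\|_{L^\infty(\Omega)}^{1/2}\bigr)\Big(\|F_\varepsilon\|_{H^1([0,T];L^2(\Omega))}\\
&\quad + \|u_{0,\varepsilon}-\tilde u_{0,\varepsilon}\|_{H^1_0(\Omega)} + \|u_{1,\varepsilon}-\tilde u_{1,\varepsilon}\|_{H^1_0(\Omega)}\Big)\\
&\quad + \bigl(1+\|V_\varepsilon\|_{L^\infty(\Omega)}\bigr)\|u_{0,\varepsilon}-\tilde u_{0,\varepsilon}\|_{H^2_0(\Omega)}.
\end{align*}
The prefactors containing $\|V_\varepsilon\|_{L^\infty(\Omega)}$ grow at most polynomially in $\varepsilon^{-1}$ by moderateness of $V_\varepsilon$, while every remaining factor on the right decays faster than any power of $\varepsilon$: $F_\varepsilon$ by the previous step, and the initial data differences by the hypotheses of Definition \ref{def5.4}. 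Consequently $\|W_\varepsilon\|_{M(\Omega)}\leq C_N\varepsilon^N$ for every $N\in\mathbb{N}$, which is exactly $M$-negligibility of $u_\varepsilon-\tilde u_\varepsilon$.

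The main obstacle I anticipate is the correct estimation of the cross term $(\tilde V_\varepsilon-V_\varepsilon)\tilde u_\varepsilon$: one must ensure that its $H^1([0,T];L^2(\Omega))$-norm is genuinely controlled by $\|\tilde V_\varepsilon-V_\varepsilon\|_{L^\infty(\Omega)}$ times an $M$-moderate norm of $\tilde u_\varepsilon$, relying on the embedding $M(\Omega)\hookrightarrow H^1([0,T];L^2(\Omega))$ and on the observation that differentiation in $t$ does not act on the $x$-dependent potential. The rest of the argument is essentially the bookkeeping of polynomial growth against super-polynomial decay that is typical of the very weak framework.
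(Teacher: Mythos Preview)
Your proposal is correct and follows essentially the same approach as the paper's own proof: you subtract the two regularised systems, obtain a homogeneous Dirichlet problem for the difference with source $F_\varepsilon=(f_\varepsilon-\tilde f_\varepsilon)+(\tilde V_\varepsilon-V_\varepsilon)\tilde u_\varepsilon$, apply the a priori estimate \eqref{4.3}, and conclude by multiplying moderate prefactors against negligible data. Your version is in fact slightly more careful than the paper's in justifying the bound on $\|(\tilde V_\varepsilon-V_\varepsilon)\tilde u_\varepsilon\|_{H^1([0,T];L^2(\Omega))}$ via the embedding $M(\Omega)\hookrightarrow H^1([0,T];L^2(\Omega))$ and in noting that the zero boundary trace is automatic for $H^2_0\times H^1_0$ initial data.
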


\begin{proof}
Let us denote by $u_\varepsilon$ and $\Tilde{u}_\varepsilon$ the families of solutions for the initial/boundary problems \eqref{5.4} and \eqref{5.5} correspondingly. Let $U_\varepsilon$ represent the difference between these nets $U_\varepsilon(t,\cdot):=u_\varepsilon(t,\cdot)-\Tilde{u}_\varepsilon(t,\cdot)$ then $U_\varepsilon$ solves the equation
\begin{equation}\label{5.6}
    \left\{\begin{array}{l}\partial^2_t U_\varepsilon(t,x)-\Delta U_\varepsilon(t,x)+V_\varepsilon(x) U_\varepsilon(t,x)=F_\varepsilon(t,x),\quad (t,x)\in [0,T]\times\Omega,\\
 U_\varepsilon(0,x)=(u_{0,\varepsilon}-\Tilde{u}_{0,\varepsilon})(x),\,\,\, x\in \Omega, \\
\partial_t U_\varepsilon(0,x)=(u_{1,\varepsilon}-\Tilde{u}_{1,\varepsilon})(x), \,\,\, x\in \Omega,\\
U_\varepsilon(t,x)=0,\,\,(t,x)\in[0,T]\times\partial\Omega,
\end{array}\right.
\end{equation}
where we set $F_\varepsilon(t,x):=(\Tilde{V}_\varepsilon(x)-V_\varepsilon(x))\Tilde{u}_\varepsilon(t,x)+f_\varepsilon(t,x)-\Tilde{f}_\varepsilon(t,x)$ for the source term to the initial/boundary problem \eqref{5.6}.

Taking the $M(\Omega)$-norm of the $U_\varepsilon$ by using \eqref{4.3}, we obtain
\begin{align*}
 \|U_\varepsilon\|_{M(\Omega)}&\lesssim\left(1+\|V_{\varepsilon}\|^\frac{1}{2}_{L^\infty(\Omega)}\right)\left(\|F_{\varepsilon}\|_{H^1([0,T];L^2(\Omega))}+\|U_{\varepsilon}(0,\cdot)\|_{H^1_0(\Omega)}\right.\\
&\left.+\|\partial_tU_{\varepsilon}(0,\cdot)\|_{H^1_0(\Omega)}\right)+\left(1+\|V_{\varepsilon}\|_{L^\infty(\Omega)} \right)\|U_{\varepsilon}(0,\cdot)\|_{H^2_0(\Omega)}.
\end{align*}

For the source term $F_\varepsilon$, we have
\[
\|F_\varepsilon\|_{H^1([0,T];L^2(\Omega))}\leq \|V_\varepsilon-\Tilde{V}_\varepsilon\|_{L^\infty}\|\Tilde{u}_\varepsilon\|_{H^1([0,T];L^2(\Omega))}+\|f_\varepsilon-\Tilde{f}_\varepsilon\|_{H^1([0,T];L^2(\Omega))},
\]
and combining the initial data of \eqref{5.6}, we obtain
\begin{align*}
 \|U_\varepsilon\|_{M(\Omega)}&\lesssim \left(1+\|V_{\varepsilon}\|^\frac{1}{2}_{L^\infty(\Omega)}\right) \left(\|V_\varepsilon-\Tilde{V}_\varepsilon\|_{L^\infty}\|\Tilde{u}_\varepsilon\|_{H^1([0,T];L^2(\Omega))}\right.\\
&+\left.\|f_\varepsilon-\Tilde{f}_\varepsilon\|_{H^1([0,T];L^2(\Omega))}+\|u_{0,\varepsilon}-\Tilde{u}_{0,\varepsilon}\|_{H^1_0(\Omega)}+\|u_{1,\varepsilon}-\Tilde{u}_{1,\varepsilon}\|_{H^1_0(\Omega)}\right)\\
&+\left(1+\|V_{\varepsilon}\|_{L^\infty(\Omega)} \right)\|u_{0,\varepsilon}-\Tilde{u}_{0,\varepsilon}\|_{H^2_0(\Omega)} .
\end{align*}
Considering the negligibility of the nets $V_\varepsilon-\Tilde{V}_\varepsilon$, $u_{0,\varepsilon}-\Tilde{u}_{0,\varepsilon}$, $u_{1,\varepsilon}-\Tilde{u}_{1,\varepsilon}$ and $f_\varepsilon-\Tilde{f}_\varepsilon$, we get
\begin{align*}
\|U_\varepsilon\|_{M(\Omega)}&\leq \left(1+C_1\varepsilon^{-N_1}\right)\left(C_2\varepsilon^{N_2}\varepsilon^{-N_3}+C_3\varepsilon^{N_4}+C_4\varepsilon^{N_5}+C_5\varepsilon^{N^6}\right)\\
&+\left(1+C_6\varepsilon^{-N_7}\right)\varepsilon^{N_8}
\end{align*}
for some $C_1>0$, $C_2>0$, $C_3>0$, $C_4>0$, $C_5>0$, $C_6>0$, $N_1,\,N_3,\,N_7\in \mathbb{N}$ and all $N_2,\,N_4,\,N_5,\,N_6,\,N_8\in \mathbb{N}$. Then, for some $C_K>0$ and all $K\in\mathbb{N}$ 
\[\|U_\varepsilon\|_{M(\Omega)}\leq C_K \varepsilon^K,\]
and this completes the proof of Theorem \ref{thm5.2}.
\end{proof}

\begin{thm}[Consistency]\label{thm5.3} Assume that $0\leq V\in L^\infty(\Omega)$, $g\in F(\Omega)$ and let $(V_\varepsilon)$ be any $L^\infty$-regularization of $V$, that is $\|V_\varepsilon-V\|_{L^\infty(\Omega)}\to 0$ as $\varepsilon\to 0$. Let the initial data satisfy $(u_0,\, u_1) \in H^2_0(\Omega)\times H^1_0(\Omega)$ and the source term $f\in H^1([0,T];L^2(\Omega))$. Let $u$ be a very weak solution of the initial/boundary problem \eqref{5.2}. Then for any families $V_\varepsilon$, $u_{0,\varepsilon}$, $u_{1,\varepsilon}$, $f_\varepsilon$ and $g_\varepsilon$ such that $\|u_{0}-u_{0,\varepsilon}\|_{H^2_0(\Omega)}\to 0$, $\|u_{1}-u_{1,\varepsilon}\|_{H^1_0(\Omega)}\to 0$, $\|V-V_{\varepsilon}\|_{L^\infty(\Omega)}\to 0$, $\|f-f_\varepsilon\|_{H^1([0,T];L^2(\Omega))}\to0$ as $\varepsilon\to 0$, any representative $(u_\varepsilon)$ of the very weak solution converges as 
$$\|u-u_\varepsilon\|_{M(\Omega)}\to 0$$ 
for $\varepsilon\to 0$ to the unique classical solution $u\in M(\Omega)$ of the initial/boundary problem \eqref{5.2} given by Theorem \ref{th4.1}.
\end{thm}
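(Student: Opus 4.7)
The plan is to set $W_\varepsilon := u - u_\varepsilon$ and reduce matters to the homogeneous-boundary a priori estimate from Theorem \ref{th4.1}. Since both the classical solution $u$ (which exists and lies in $M(\Omega)$ by Theorem \ref{th4.1}) and any representative $u_\varepsilon$ of the very weak solution carry the same boundary datum $g$ on $[0,T]\times\partial\Omega$, the difference satisfies $\Tr_{\partial\Omega} W_\varepsilon = 0$, and subtracting the two wave equations gives
\begin{equation*}
\partial_t^2 W_\varepsilon - \Delta W_\varepsilon + V W_\varepsilon = F_\varepsilon,\qquad W_\varepsilon(0) = u_0 - u_{0,\varepsilon},\quad \partial_t W_\varepsilon(0) = u_1 - u_{1,\varepsilon},
\end{equation*}
with source $F_\varepsilon := (f - f_\varepsilon) + (V_\varepsilon - V)\,u_\varepsilon$. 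Because the boundary trace of $W_\varepsilon$ vanishes, the decomposition in Theorem \ref{th4.1} reduces to taking $G\equiv 0$, so \eqref{4.3} applies directly to $W_\varepsilon$ and yields
\begin{align*}
\|W_\varepsilon\|_{M(\Omega)} &\lesssim \bigl(1+\|V\|^{1/2}_{L^\infty(\Omega)}\bigr)\Bigl(\|F_\varepsilon\|_{H^1([0,T];L^2(\Omega))}+\|u_0-u_{0,\varepsilon}\|_{H^1_0(\Omega)}\\
&\qquad+\|u_1-u_{1,\varepsilon}\|_{H^1_0(\Omega)}\Bigr)+\bigl(1+\|V\|_{L^\infty(\Omega)}\bigr)\|u_0-u_{0,\varepsilon}\|_{H^2_0(\Omega)}.
\end{align*}

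Of the four terms on the right, three vanish as $\varepsilon\to 0$ directly by the hypotheses $\|u_0-u_{0,\varepsilon}\|_{H^2_0}\to 0$ and $\|u_1-u_{1,\varepsilon}\|_{H^1_0}\to 0$. The only nontrivial task is to show $\|F_\varepsilon\|_{H^1([0,T];L^2(\Omega))}\to 0$. Splitting $F_\varepsilon$ and using that $V_\varepsilon - V$ is time-independent and lies in $L^\infty(\Omega)$,
\begin{equation*}
\|F_\varepsilon\|_{H^1([0,T];L^2(\Omega))} \leq \|f-f_\varepsilon\|_{H^1([0,T];L^2(\Omega))} + \|V-V_\varepsilon\|_{L^\infty(\Omega)}\|u_\varepsilon\|_{H^1([0,T];L^2(\Omega))},
\end{equation*}
where the first summand goes to zero by assumption, while the second requires a uniform-in-$\varepsilon$ bound on $\|u_\varepsilon\|_{H^1([0,T];L^2(\Omega))}$.

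Supplying this uniform bound is the main (and essentially only) obstacle, and it is here that norm-convergence of the regularizing nets, rather than mere $L^\infty$/$H^k_0$/$H^1(L^2)$-moderateness, is used. Writing $u_\varepsilon = u_\varepsilon^* + G$ with $G = E_{\partial\Omega}g \in M_1(\Omega)$ fixed (the same lifting works for $u$ and for all $u_\varepsilon$, since the boundary datum $g$ is the same), the a priori estimate \eqref{4.3} of Theorem \ref{th4.1} applied to $u_\varepsilon^*$ gives
\begin{align*}
\|u_\varepsilon^*\|_{M(\Omega)} &\lesssim \bigl(1+\|V_\varepsilon\|^{1/2}_{L^\infty(\Omega)}\bigr)\bigl(\|f_\varepsilon\|_{H^1([0,T];L^2(\Omega))}+\|u_{0,\varepsilon}\|_{H^1_0(\Omega)}+\|u_{1,\varepsilon}\|_{H^1_0(\Omega)}\bigr)\\
&\quad+\bigl(1+\|V_\varepsilon\|_{L^\infty(\Omega)}\bigr)\|u_{0,\varepsilon}\|_{H^2_0(\Omega)}+\|g\|_F.
\end{align*}
Since $\|V_\varepsilon\|_{L^\infty}$, $\|f_\varepsilon\|_{H^1(L^2)}$, $\|u_{0,\varepsilon}\|_{H^2_0}$ and $\|u_{1,\varepsilon}\|_{H^1_0}$ are all uniformly bounded as $\varepsilon\to 0$ (they converge to the corresponding norms of $V,f,u_0,u_1$), the right-hand side is uniformly bounded in $\varepsilon$. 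The continuous embeddings $M(\Omega)\hookrightarrow H^2([0,T];L^2(\Omega))\hookrightarrow H^1([0,T];L^2(\Omega))$ and $M_1(\Omega)\hookrightarrow H^1([0,T];L^2(\Omega))$ then furnish the required uniform bound $\|u_\varepsilon\|_{H^1([0,T];L^2(\Omega))}\leq C$. Feeding this back into the bound on $\|F_\varepsilon\|_{H^1([0,T];L^2(\Omega))}$ gives $F_\varepsilon\to 0$ in $H^1([0,T];L^2(\Omega))$, and combining with the main estimate for $W_\varepsilon$ yields $\|u-u_\varepsilon\|_{M(\Omega)} = \|W_\varepsilon\|_{M(\Omega)}\to 0$, which is the desired consistency.
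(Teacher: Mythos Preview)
Your argument is correct and follows the same route as the paper: form the difference, observe that its boundary trace vanishes, and apply the a priori estimate \eqref{4.3} with $g=0$. The one place where the paper is more economical is in the choice of potential for the difference equation: writing it as $\partial_t^2 \tilde U_\varepsilon - \Delta \tilde U_\varepsilon + V_\varepsilon \tilde U_\varepsilon = F_\varepsilon$ (potential $V_\varepsilon$ rather than your $V$) makes the source $F_\varepsilon = (V_\varepsilon - V)\,u + (f - f_\varepsilon)$ depend on the \emph{fixed} classical solution $u$, so $\|u\|_{H^1([0,T];L^2(\Omega))}$ is simply a constant and your entire paragraph establishing a uniform-in-$\varepsilon$ bound on $\|u_\varepsilon\|_{H^1([0,T];L^2(\Omega))}$ becomes unnecessary.

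Your derivation of that uniform bound is essentially fine, but one technical point deserves care: the embedding $M(\Omega)\hookrightarrow H^2([0,T];L^2(\Omega))$ is \emph{not} continuous with respect to the $M$-norm of Definition~\ref{def_9}, which carries only $\|\partial_t^2 u\|_{L^2(L^2)}$, $\|\Delta u\|_{L^2(L^2)}$ and $\|\sqrt V u\|_{L^2(L^2)}$, not $\|u\|_{L^2(L^2)}$ or $\|\partial_t u\|_{L^2(L^2)}$. The $H^1([0,T];L^2)$ control you need really comes from the lower-order energy estimates \eqref{partial_tu_m}--\eqref{u_m-L^inf} of Corollary~\ref{cor3.1} (or, equivalently, from the $M$-bound together with the uniformly bounded initial data via integration in $t$), not from an abstract embedding of $M(\Omega)$.
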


\begin{proof}
Let us denote $\Tilde{U}_\varepsilon$ as the difference between $u$ and $u_\varepsilon$, that is
\[\Tilde{U}_\varepsilon(t, x):= u(t,x)-u_\varepsilon(t,x).\]
Then the net $\Tilde{U}_\varepsilon$ is a solution to the initial/boundary problem
\begin{equation}\label{5.7}
    \left\{\begin{array}{l}
        \partial^2_t\Tilde{U}_\varepsilon(t,x)-\Delta \Tilde{U}_\varepsilon(t,x)+V_\varepsilon(x)\Tilde{U}_\varepsilon(t,x)=F_\varepsilon(t,x),\quad (t,x)\in [0,T]\times \Omega,\\
        \Tilde{U}_\varepsilon(0,x)=(u_0-u_{0,\varepsilon})(x),\quad x\in \Omega,\\
        \partial_t\Tilde{U}_\varepsilon(0,x)=(u_1-u_{1,\varepsilon})(x),\quad x\in \Omega,\\
        \Tilde{U}_\varepsilon(t,x)=0, \quad (t,x)\in[0,T]\times\partial\Omega,
    \end{array}\right.
\end{equation}
where $F_\varepsilon(t,x)=(V_\varepsilon(x)-V(x))u(t,x)+f(t,x)-f_\varepsilon(t,x)$. Analogously to Theorem \ref{5.2} we have that
\begin{align*}
 \|\Tilde{U}_\varepsilon\|_{M(\Omega)}&\lesssim \left(1+\|V\|^\frac{1}{2}_{L^\infty(\Omega)}\right) \left(\|V-V_\varepsilon\|_{L^\infty}\|u\|_{H^1([0,T];L^2(\Omega))}\right.\\
&+\left.\|f-f_\varepsilon\|_{H^1([0,T];L^2(\Omega))}+\|u_0-u_{0,\varepsilon}\|_{H^1_0(\Omega)}+\|u_1-u_{1,\varepsilon}\|_{H^1_0(\Omega)}\right)\\
&+\left(1+\|V\|_{L^\infty(\Omega)} \right)\|u_0-u_\varepsilon\|_{H^2_0(\Omega)}.
\end{align*}
Since
$$\|u_{0}-{u}_{0,\varepsilon}\|_{H^2_0(\Omega)}\to 0,\quad \|u_{1}-{u}_{1,\varepsilon}\|_{H^1_0(\Omega)}\to 0,\quad \|V-V_\varepsilon\|_{L^\infty(\Omega)}\to 0,$$
\[
\|f-f_\varepsilon\|_{H^1([0,T];L^2(\Omega))}\to0
\]
for $\varepsilon\to 0$ and $u$ is a very weak solution of the initial/boundary problem \eqref{5.2}, we get
$$\|\Tilde{U}_\varepsilon\|_{M(\Omega)}\to 0$$
for $\varepsilon\to 0$. This proves Theorem \ref{thm5.3}.
\end{proof}

\end{document}